\pgfplotsset{
  tick label style={font=\footnotesize},
  label style={font=\footnotesize},
  legend style={font=\footnotesize}
}
\theoremstyle{plain}
\newtheorem{theorem}{Theorem}
\newtheorem{lemma}[theorem]{Lemma}
\newtheorem{proposition}[theorem]{Proposition}
\newtheorem{example}[theorem]{Example}
\declaretheoremstyle[
bodyfont=\normalfont\itshape,
headformat=\NAME\NUMBER
]{nospacetheorem}
\newcommand{\x}{\bm{x}}
\renewcommand{\b}{\bm{b}}
\renewcommand{\c}{\bm{c}}
\newcommand{\y}{\bm{y}}
\newcommand{\z}{\bm{z}}
\newcommand{\xz}{\bm{x}^{(0)}}
\newcommand{\xk}{\bm{x}^{(k)}}
\newcommand{\xn}{\bm{x}^{(n)}}
\newcommand{\xtk}{x^{(k)}}
\newcommand{\xK}{\bm{x}^{(K)}}
\newcommand{\xkp}{\bm{x}^{(k+1)}}
\newcommand{\yK}{\bm{y}^{(K)}}
\newcommand{\Dut}{D_{u}}
\newcommand{\gn}{\bm{g}^{(n)}}
\newcommand{\gtk}{g^{(k)}}
\newcommand{\gK}{\bm{g}^{(K)}}
\newcommand{\xtmin}{x^{*}}
\newcommand{\xmin}{\bm{x}^{*}}
\renewcommand{\v}{\bm{v}}
\renewcommand{\u}{\bm{u}}
\renewcommand{\b}{\bm{b}}
\renewcommand{\c}{\bm{c}}
\renewcommand{\d}{\bm{d}}
\newcommand{\N}{\mathbb{N}}
\newcommand{\BigOh}{\mathcal{O}}
\renewcommand{\P}{\mathcal{P}}
\newcommand{\D}{\mathcal{D}}
\newcommand{\U}{\mathcal{U}}
\newcommand{\X}{\mathcal{X}}
\newcommand{\RW}{\mathcal{R}}
\newcommand{\R}{\mathbb{R}}
\newcommand{\Rext}{\overline{\mathbb{R}}}
\newcommand{\RN}{\mathbb{R}^{N}}
\newcommand{\RP}{\mathbb{R}^{P}}
\newcommand{\RM}{\mathbb{R}^{M}}
\newcommand{\RNXRP}{\mathbb{R}^{N}\times\mathbb{R}^{P}}
\newcommand{\RNXP}{\mathbb{R}^{N \times P}}
\newcommand{\Aadj}{A^{*}}
\newcommand{\Badj}{B^{*}}
\newcommand{\pc}{p^{*}}
\newcommand{\pcc}{p^{**}}
\newcommand{\fc}{f^{*}}
\newcommand{\hc}{h^{*}}
\newcommand{\kc}{k^{*}}
\newcommand{\gc}{g^{*}}
\newcommand{\abs}[1]{\left\lvert #1 \right\rvert}
\newcommand{\norm}[1]{\left\lVert #1 \right\rVert}
\newcommand{\innerprod}[2]{\langle #1 , #2 \rangle}
\newcommand{\sgn}[1]{\mathrm{sgn}(#1)}
\newcommand{\Du}{D_{\bm{u}}}
\newcommand{\grad}[2]{\nabla_{#1} #2}
\newcommand{\subdiff}[2]{\partial_{#1} #2}
\newcommand{\hess}[2]{\nabla_{#1}^{2} #2}
\newcommand{\dom}[1]{\textnormal{dom}\:#1}
\newcommand{\ri}[1]{\textnormal{ri}\:#1}
\newcommand{\intr}[1]{\textnormal{int}\:#1}
\newcommand{\seq}[2]{(#1^{(#2)})_{#2 \in \N}}
\newcommand{\fref}[1]{Figure~\ref{#1}}
\newcommand{\sref}[1]{Section~\ref{#1}}
\newcommand{\exref}[1]{Example~\ref{#1}}
\newcommand{\tref}[1]{Theorem~\ref{#1}}
\newcommand{\lref}[1]{Lemma~\ref{#1}}
\DeclareMathOperator*{\argmax}{arg\,max}
\DeclareMathOperator*{\argmin}{arg\,min}
\newcommand{\st}{\mathrm{s.t.}}
\newcommand{\figlen}{0.98}
\newcommand{\figcelllen}{0.245}
\begin{document}

\thispagestyle{empty}
\begin{center}
\vspace*{0.03\paperheight}
{\Large\bf Differentiating the Value Function by using \\[1ex]
Convex Duality} \\
\bigskip
\bigskip
{\large Sheheryar Mehmood and Peter Ochs \\ \medskip
%\today 
{\small
University of T\"{u}bingen, T\"{u}bingen, Germany \\
}
}
\end{center}
\bigskip

% ********************
% >>>>> ABSTRACT <<<<<
% ********************
\begin{abstract}
We consider the differentiation of the value function for parametric optimization problems. Such problems are ubiquitous in Machine Learning applications such as structured support vector machines, matrix factorization and min-min or minimax problems in general. Existing approaches for computing the derivative rely on strong assumptions of the parametric function. Therefore, in several scenarios there is no theoretical evidence that a given algorithmic differentiation strategy computes the true gradient information of the value function. We leverage a well known result from convex duality theory to relax the conditions and to derive convergence rates of the derivative approximation for several classes of parametric optimization problems in Machine Learning. We demonstrate the versatility of our approach in several experiments, including non-smooth parametric functions. Even in settings where other approaches are applicable, our duality based strategy shows a favorable performance.
\end{abstract}

%\makekeywords

%\tableofcontents

%%% TODO: FUTURE WORK: \todo[inline]{See paper in misc and the work by Odor et al., which shows an example with non-Lipschitz gradient!}

% *******************
% >>>>> SECTION <<<<<
% *******************
\section{Introduction}

\begin{figure}[t]
\begin{subfigure}{.32\textwidth}
    \centering
    \includegraphics[width=\linewidth]{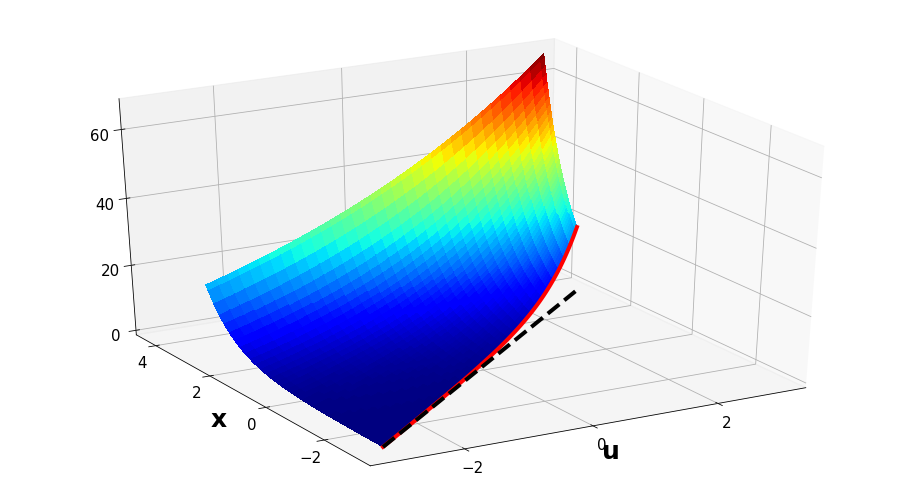}
\end{subfigure}
\begin{subfigure}{.32\textwidth}
    \centering
    \includegraphics[width=\linewidth]{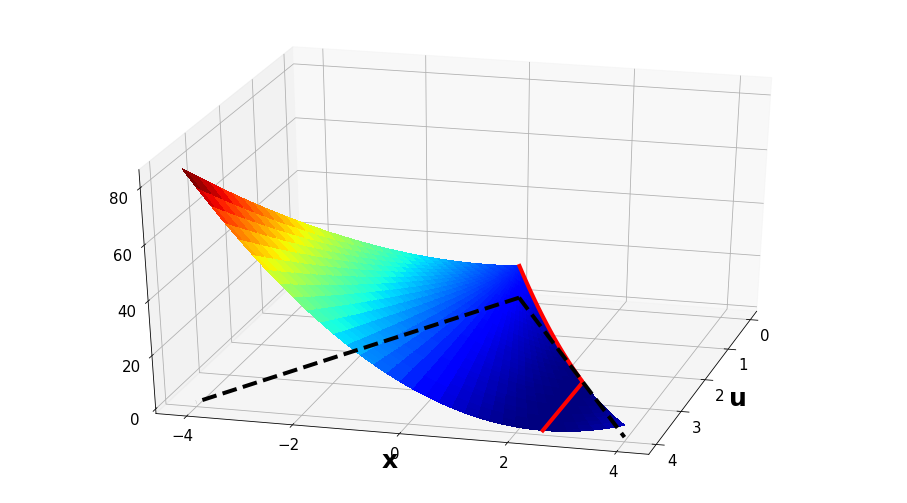}
\end{subfigure}
\begin{subfigure}{.32\textwidth}
    \centering
    \includegraphics[width=\linewidth]{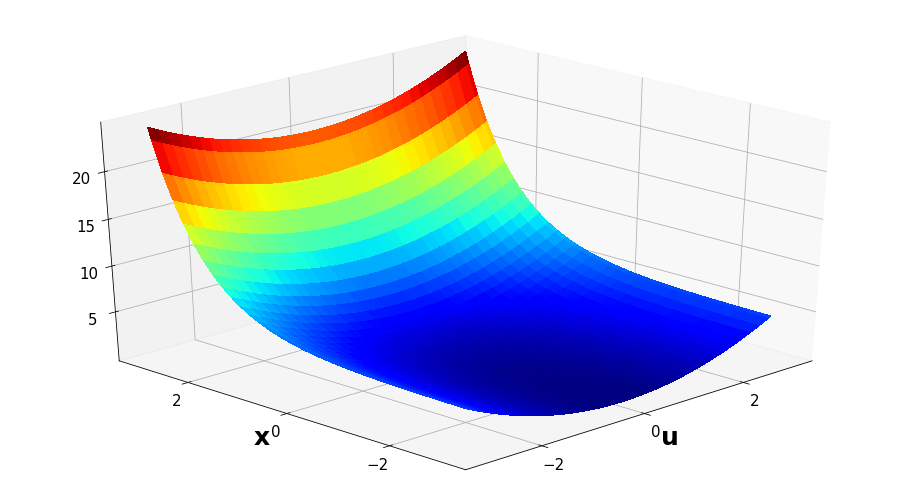}
\end{subfigure}
\begin{subfigure}{.32\textwidth}
    \centering
    \includegraphics[width=\linewidth]{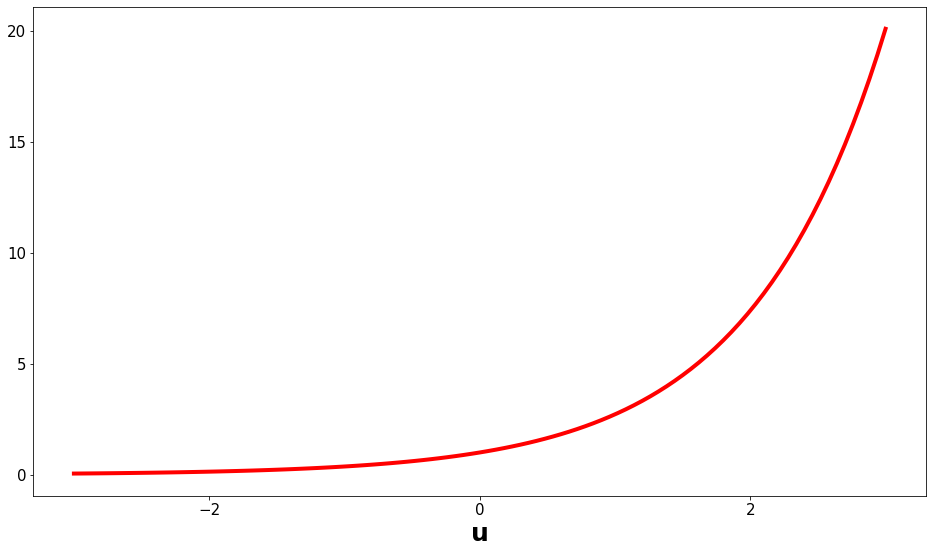}
\end{subfigure}
\begin{subfigure}{.32\textwidth}
    \centering
    \includegraphics[width=\linewidth]{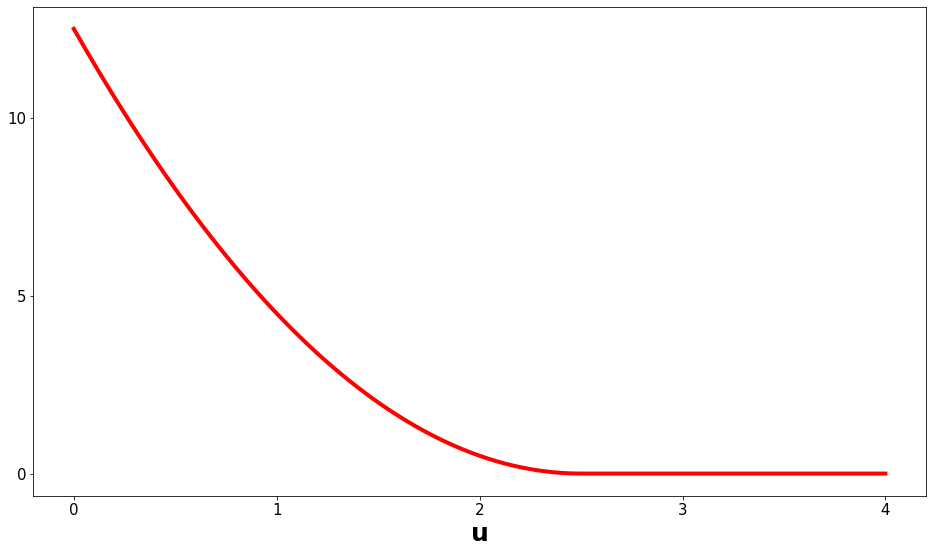}
\end{subfigure}
\begin{subfigure}{.32\textwidth}
    \centering
    \includegraphics[width=\linewidth]{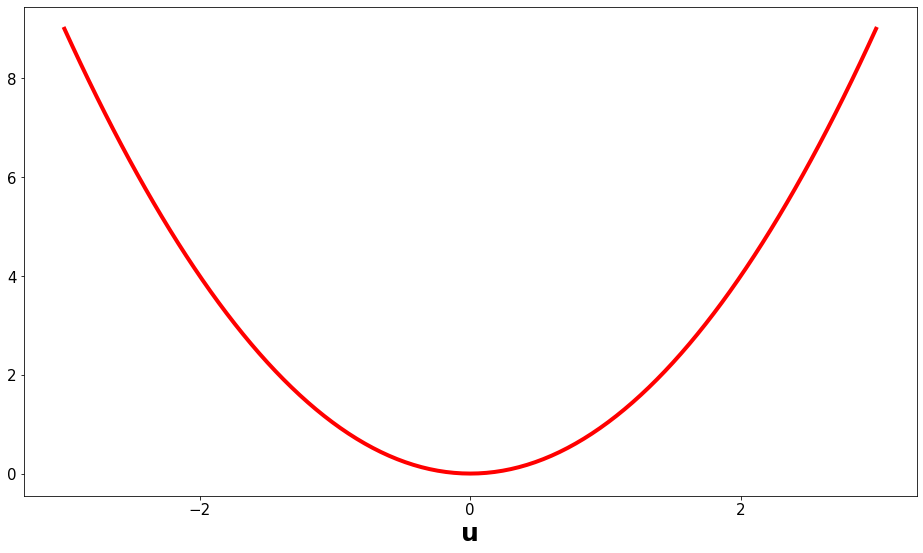}
\end{subfigure}
\caption{Graphical Depiction of some simple objective functions which have a smooth value function but either don't have a minimum or are not differentiable at the minimizer. (Top row) The objective $ f $ is plotted against $ x $ and $ u $ for some toy examples (left) $ \R \times \R \ni (x, u) \mapsto f(x, u) = \exp(x) + \delta_{[u, +\infty)} (x) $, (middle) $ \R \times [0, 4] \ni (x, u) \mapsto f(x, u) = (a x - b)^{2}/2 + \delta_{[-u, u]} (x) $ where $ a, b \neq 0 $ and (right) $ \R \times \R \ni (x, u) \mapsto f(x, u) = \exp(x) + u^{2} / 2 $. The function $u \mapsto (u, \xtmin, f(\xtmin, u)) $ is sketched in red in first two examples (left and middle). No such curve is shown for the third example because the minimum w.r.t. $ x $ is not attained. The black dashed-line (left and middle) shows the boundary of the feasible set in $ \R \times \R $. (Bottom row) Value function $ p $ plotted against $ u $ for the corresponding examples.}
\label{fig:IntroEx}
\end{figure}

Given a function $ f : \RNXRP \to \R $ with values $ f(\x, \u) $, we consider the following parametric optimization problem:
\begin{equation} \tag{$ \P $} \label{eq:PrimalProb}
    p(\u) \coloneqq \inf_{\x \in \RN} f(\x, \u) \,.
\end{equation}
The optimal value of $ f(\cdot, \u) $, which we denote by $ p (\u) $, depends on the parameter $ \u $ and is commonly referred to as the \textit{value function} of \eqref{eq:PrimalProb} or the \textit{infimal projection} of $ f $. When the minimum is attained at some $ \xmin(\u) \in \RN $ for a given $ \u \in \RP $, the value function is given by $ p(\u) = f(\xmin(\u), \u) $. For many applications, quantifying the change in $ p $ with respect to $ \u $ is key, which is achieved by computing gradient or subgradient information of $  p $.

This is particularly true for Machine Learning applications, for which a parametric dependency occurs naturally, for example, when solving a min-min or minimax optimization problem, in Structured Support Vector Machines \cite{TGK04, TJHA05}, Sparse Dictionary Learning \cite{MBPS10}, Generative Adversarial Networks \cite{GPM+14} and Matrix Factorization. Another important area where such derivative information is crucial is the Sensitivity Analysis of an optimization problem, which finds applications in the shadow price problem \cite[Section 4.3]{Sti18} and also in bridge crane design or breakwater modeling \cite{CMC08}. The decision-making is based on a measure of how sensitive the model is when parameters $ \u $ are changed.

If $\xmin(\u)$ is available and differentiable, the gradient information can be computed by differentiating $ p(\u) = f(\xmin(\u), \u) $ with respect to $ \u $, i.e., 
\begin{equation*}
    \grad{}{p}(\u) = \Du \xmin (\u) \grad{\x}{f} (\xmin(\u), \u) + \grad{\u}{f} (\xmin(\u), \u) = \grad{\u}{f} (\xmin(\u), \u) \,.
\end{equation*}
However, clearly, this approach demands for strong smoothness conditions of the parametric function $ f $ and the solution mapping $ \xmin(\u) $, which are not satisfied for common Machine Learning applications.

Consider for example the following sparsity constrained linear regression problem:
\begin{equation} \label{eq:Lasso}
    \min_{\x} \norm{A \x- b}_2\,,\ \st\ \norm{\x}_1 \leq u\,,
\end{equation}
where $A\in \R^{M\times N}$, $b\in \R^M$, and $u\geq 0$. As a constrained optimization problem, the objective (including the constraint in terms of an indicator function) is not differentiable. The value function, however, is continuously differentiable on $ (0, \infty) $ and subdifferentiable at $ u=0 $ \cite{BF08}. As noted in \cite{BF08} and more generally in \cite{BF11, ABF13}, its gradient can be used to solve the following minimial norm problem:
\begin{equation} \label{eq:LassoEqv}
    \min_{\x} \norm{\x}_1 \,,\ \st\ \norm{A \x - b}_2 \leq w\,.
\end{equation}
The problem in \eqref{eq:Lasso} is one of many instances where the parametric function $ f $ is jointly convex in its arguments. Yet algorithmic differentiation strategies based on differentiating approximations to the solution mapping cannot be applied. This is due to the fact that the boundary of the feasible set changes with $ \u $ and when the solution $ \xmin (\u) $ lies at the boundary for some $ \u $, the subdifferential of $ f $ with respect to $ \u $ at $ (\xmin(\u), \u) $ is a shifted non-trivial cone, hence, in particular not single-valued. We explain this phenomenon more concretely in \sref{prob:DirDiff} (see also \fref{fig:IntroEx}).

As a remedy, we invoke standard results from convex duality of the function $f$ to derive the above mentioned differentiability property of the value function for a large class of optimization problems including \eqref{eq:Lasso}. In fact, beyond differentiability, we explore the formula 
\begin{equation*}
    \subdiff{}{p} (\u) = \arg\max_{\y \in \RP} \innerprod{\u}{\y} - \fc(0, \y) \,,
\end{equation*}
which expresses the convex (Fr\'echet) subdifferential of the value function $p$ at $\u$ as the set of solutions to a certain optimization problem that depends on the convex conjugate $\fc$ of $f$. For a jointly convex function $f$ in $(\x,\u)$, the validity of the formula is asserted under the weak assumptions that $p(\u)$ is finite (i.e. the infimum of $f(\cdot,\u)$ in \eqref{eq:PrimalProb} is finite) and $ \u \in \ri (\dom{p}) $ lies in the relative interior of the domain of $p$. Therefore, in these situations, the problem of differentiating the value function $p$ is equivalent to solving a convex optimization problem, which allows us to explore the large literature on convex optimization algorithms. 

Since single-valuedness of the subdifferential of $p$ implies differentiability, for example, strict convexity of $\y\mapsto \fc(0, \y)$ implies differentiability of $p$ without the need for $f(\x,\u)$ to be differentiable. Therefore our approach allows us to compute the variation (gradient) of the value function $p$ in situations for which commonly used direct differentiation strategies, for example based on automatic differentiation, cannot be applied. Nevertheless, even if the parametric function $f$ is sufficiently smooth, the flexibility to apply various (optimal) convex optimization algorithms for computing this derivative information compares favorably with those direct differentiation strategies. 

For the large class of optimization problems that we consider, we summarize algorithms with their convergence guarantees based on the properties of the objective function.

\paragraph{Remark.} Differentiation of the value function $p$ in \eqref{eq:PrimalProb} is not to be confused with differentiating the optimal solution mapping $\xmin(\u)$ with respect to $\u$.  Besides its usage in computing automatic and implicit gradient estimator, the argmin derivative is used in optimization layers, that is, neural networks whose output is given by solving an optimization problem \cite{AK17, AAB+19}. It is also required in bilevel optimization \cite{DKPK15}, a most well known application of which is gradient-based hyperparameter optimization or parameter learning \cite{Dom12, KP13, DVFP14}.

Another problem which is similar to ours is the differentiation of a function $ g(\x, \u) $ with respect to the parameter $ \u $ evaluated at a solution $ \xmin(\u) $ of a system of parametric nonlinear equations $ h(\x, \u) = 0 $, where $ g : \RN \times \RP \to \RM $ and $ h : \RN \times \RP \to \RN $ satisfy some regularity conditions. We can also replace the function $ g $ with a functional and the non-linear system with a parametric ordinary differential equation. The two problems are related (but not equivalent) because when $ f $ in \eqref{eq:PrimalProb} is continuously differentiable and has a minimium $ \xmin(\u) $ in $ \x $ for a given $ \u $, then $ h = \grad{\x}{f} $ and $ g = f $ with $ M = 1 $ and our goal is to differentiate $ p(\u) = f(\xmin(\u), \u) $. To differentiate $ g(\xmin(\u), \u) $ with respect to $ \u $, we can make use of Piggyback differentiation \cite{GF03} or the Adjoint-state method \cite{Pon61, PLA18}. These techniques find their use in solving constrained optimization problems (where constraints are often given as ODE's or PDE's) with various applications in Geophysics \cite{Ple06}, Medicine \cite{KFTC12} and Neural Networks \cite{CRBD18}.

\section{Problem Setting} \label{sec:PS}

We consider parametric optimization problems of type \eqref{eq:PrimalProb} and seek for computing $\nabla p(\u)$, i.e., the variation of the value function $p$ with respect to $\u$.  One of our major goals is to characterize the properties of $f$ for which various numeric differentiation strategies with theoretical convergence guarantees can be used. We emphasize differentiation strategies based on iterative algorithms and provide convergence rates. 

First, in Section~\ref{prob:AAIG}, we recall the most widely used approaches for smooth parametric functions $f$, and demonstrate their limitations for several examples in \sref{prob:DirDiff}. Therefore, in \sref{sec:DualGrad}, as a remedy for such situations, we leverage a well known result from convex duality theory for numerical estimation of the variation of the value function, which allows us to classify problem classes with convergence rates.

\subsection{Analytical, Automatic and Implicit Gradient Estimator} \label{prob:AAIG}

Ablin et al.~\cite{APM20} analyze three different methods for iterative derivative approximation of smooth parametric functions $f$, provide convergences rates and enlighten a super-efficiency phenomenon for the automatic differentiation strategy. We recall their results.

Let $f$ be twice continuously differentiable on $ \RNXRP $ and $ \xmin(\u) $ be the unique minimizer for every $ \u \in \RP $ such that $ \hess{\x}{f} (\xmin(\u), \u)$ is positive definite. From the Implicit Function Theorem, we derive that $ \xmin : \RP \to \RN $ is continuously differentiable with derivative $ \Du\xmin(\u) = \varphi (\xmin (\u), \u) $, where we define the mapping $ \varphi : \RNXRP \to \RNXP $ as:
\begin{equation} \label{eq:Dmin}
    \varphi (\x, \u) = -[\hess{\x}{f} (\x, \u)]^{-1} \grad{\x\u}{f} (\x, \u)\,.
\end{equation}
The value function and its gradient are then given by:
\begin{equation*}
    p(\u) = f (\xmin(\u), \u)\ \mathrm{and}\
    \grad{}{p}(\u) = \grad{\u}{f} (\xmin(\u), \u) \,.
\end{equation*}
The expression for $ \grad{}{p} $ follows from chain rule and the optimality condition $ \grad{\x}{f} (\xmin(\u), \u) = 0 $. The minimizer $ \xmin (\u) $ is estimated by an iterative optimization method which yields a sequence $ \seq{\x}{k} $ with limit $ \xmin(\u) $. In a realistic setting such a process is terminated after $ K $ iterations to yield a so-called sub-optimal solution $ \xK(\u) $ for each $ \u $. To compute $ \grad{}{p} $, we either substitute $ \xK(\u) $ in place of $ \xmin(\u) $ in the expression for $ \grad{}{p} $ to obtain the \textbf{analytic gradient estimator}:
\begin{equation} \tag{AnG} \label{eq:AnG}
    \gK_{1} (\u) \coloneqq \grad{\u}{f} (\xK, \u)
\end{equation}
or in the expression for $ p $ and then differentiate it with respect to $ \u $, assuming that the sequence $ \seq{\x}{k} $ is differentiable, meaning that the mapping between successive iterates and the dependence on the parameter $\u$ is differentiable, giving us the \textbf{automatic gradient estimator}:
\begin{equation} \tag{AuG} \label{eq:AuG}
    \gK_{2} (\u) \coloneqq [\Du\xK]^{T} \grad{\x}{f} (\xK, \u)\ + \grad{\u}{f} (\xK, \u) \,.
\end{equation}
The term $ \Du\xK $ is an estimator of $ \Du\xmin $ and is obtained by applying automatic differentiation on $ \xK $, hence the name. Using the expression in \eqref{eq:Dmin} to estimate $ \Du\xmin $ yields the \textbf{implicit gradient estimator}, i.e.,
\begin{equation} \tag{IG} \label{eq:IG}
    \gK_{3} (\u) \coloneqq [\varphi (\xK, \u)]^{T} \grad{\x}{f} (\xK, \u)\ + \grad{\u}{f} (\xK, \u) \,.
\end{equation}
Ablin et al.~\cite{APM20} provide following error bounds for \eqref{eq:AnG}, \eqref{eq:AuG} and \eqref{eq:IG}.

\begin{theorem} \label{thm:AAIG}
Let $ D \coloneqq D_{x} \times D_{u} \subset \RNXRP $ be compact and $ f $ be $ m $-strongly convex with respect to $ \x $ and twice differentiable over $ D $ with second derivatives $ \grad{\x\u}{f} $ and $ \hess{\x}{f} $ respectively $ L_{xu} $ and $ L_{xx} $-Lipschitz continuous. Then the first derivatives $ \grad{\u}{f} $ and $ \grad{\x}{f} $ are respectively $ L_{u} $ and $ L_{x} $-Lipschitz continuous and for $ \xk $ produced by $ \xkp \coloneqq \xk - \tau \grad{\x}{f} (\xk, \u) $ with $ \tau \leq 1/L_{x} $ and $ \omega \coloneqq 1 - m\tau $, following statements hold:

\begin{enumerate}[label=\textnormal{(\alph*)}]
    \item The analytic estimator converges and we have:
    \begin{equation*}
        \norm{\gK_{1} - \grad{}{p} (\u)}_{2} \leq L_{x} \norm{\xz - \xmin (\u)}_{2} \omega^{K}
    \end{equation*}
    \item The automatic estimator converges and for $ C_{k} \coloneqq \tau (L_{x} k + \omega/2) (L_{xu} + L_{1}L_{xx}) $ with $ \norm{\Du\xk}_{2} \leq L_{1} $ we have:
    \begin{equation*}
        \norm{\gK_{2} - \grad{}{p} (\u)}_{2} \leq C_{K} \norm{\xz - \xmin (\u)}_{2} \omega^{2K - 1}
    \end{equation*}
    \item The implicit estimator converges and for $ C \coloneqq (L_{xu} + L_{1}L_{xx}) / 2 + L_{2}L_{x} $ with $ \norm{\varphi (\xK, \u)}_{2} \leq L_{2} $ we have:
    \begin{equation*}
        \norm{\gK_{3} - \grad{}{p} (\u)}_{2} \leq C \norm{\xz - \xmin (\u)}_{2} \omega^{2K}
    \end{equation*}
\end{enumerate}

\end{theorem}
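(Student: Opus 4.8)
The plan is to reduce the statement to two standard ingredients and then treat (a), (c) and (b) in turn. \emph{First ingredient (Lipschitz constants).} Since $\hess{\x}{f}$ and $\grad{\x\u}{f}$ are Lipschitz on the compact set $D$, they are bounded there; with $L_x$ a bound on the operator norm of the Jacobian $[\hess{\x}{f},\,\grad{\x\u}{f}]$ of $\grad{\x}{f}$ (and $L_u$ the analogue for $\grad{\u}{f}$), the mean value inequality makes $\grad{\x}{f}$ and $\grad{\u}{f}$ globally $L_x$- and $L_u$-Lipschitz on $D$, and in particular $\sup_D\norm{\grad{\x\u}{f}}_2\le L_x$; $m$-strong convexity gives $\sup_D\norm{[\hess{\x}{f}]^{-1}}_2\le 1/m$, so $\varphi$ from \eqref{eq:Dmin} and the iterate-derivatives $\Du\xk$ are bounded on $D$, the bounds being the constants $L_2$ and $L_1$. \emph{Second ingredient (contraction of gradient descent).} Setting $\bar H_k\coloneqq\int_0^1\hess{\x}{f}(\xmin+t(\xk-\xmin),\u)\,dt$, one has $\grad{\x}{f}(\xk,\u)=\bar H_k(\xk-\xmin)$ (using $\grad{\x}{f}(\xmin,\u)=0$), hence $\xkp-\xmin=(I-\tau\bar H_k)(\xk-\xmin)$; since $mI\preceq\bar H_k\preceq L_xI$ and $\tau\le 1/L_x$ the spectrum of $I-\tau\bar H_k$ lies in $[0,1-\tau m]=[0,\omega]$, giving $\norm{\xk-\xmin}_2\le\omega^k\norm{\xz-\xmin}_2$.

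\emph{Part (a)} is then immediate: $\grad{}{p}(\u)=\grad{\u}{f}(\xmin,\u)$ and $\gK_{1}=\grad{\u}{f}(\xK,\u)$, so the error is the variation of $\grad{\u}{f}$ in its first argument between $\xmin$ and $\xK$, whose Jacobian there is $(\grad{\x\u}{f})^{T}$ of norm $\le L_x$; combining with the contraction, $\norm{\gK_{1}-\grad{}{p}(\u)}_2\le L_x\norm{\xK-\xmin}_2\le L_x\norm{\xz-\xmin}_2\,\omega^K$. (The constant is $L_x$, not $L_u$, because the mixed partial sits inside the Jacobian of $\grad{\x}{f}$.)

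\emph{Part (c)} is the heart of the matter. Put $r\coloneqq\xK-\xmin(\u)$, $\bar H\coloneqq\int_0^1\hess{\x}{f}(\xmin+tr,\u)\,dt$, $\bar B\coloneqq\int_0^1\grad{\x\u}{f}(\xmin+tr,\u)\,dt$, and abbreviate $H_K\coloneqq\hess{\x}{f}(\xK,\u)$, $B_K\coloneqq\grad{\x\u}{f}(\xK,\u)$. Using $\grad{\x}{f}(\xmin,\u)=0$, $\grad{\x}{f}(\xK,\u)=\bar H r$, $\grad{\u}{f}(\xK,\u)-\grad{\u}{f}(\xmin,\u)=\bar B^{T}r$ and $\grad{}{p}(\u)=\grad{\u}{f}(\xmin,\u)$,
\begin{equation*}
\gK_{3}-\grad{}{p}(\u)=\varphi(\xK,\u)^{T}\bar H r+\bar B^{T}r=\big(\varphi(\xK,\u)^{T}\bar H+\bar B^{T}\big)\,r.
\end{equation*}
The point is that the bracketed matrix is $O(\norm{r}_2)$; equivalently, the implicit-function relation makes $\x\mapsto\varphi(\x,\u)^{T}\grad{\x}{f}(\x,\u)+\grad{\u}{f}(\x,\u)$ stationary at $\xmin(\u)$, i.e.\ $\varphi(\xmin,\u)^{T}\hess{\x}{f}(\xmin,\u)+\grad{\x\u}{f}(\xmin,\u)^{T}=0$. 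Substituting $\varphi(\xK,\u)^{T}=-B_K^{T}H_K^{-1}$ gives the telescoping identity
\begin{equation*}
\varphi(\xK,\u)^{T}\bar H+\bar B^{T}=-B_K^{T}H_K^{-1}\big(\bar H-H_K\big)+\big(\bar B-B_K\big)^{T},
\end{equation*}
in which $\norm{\bar H-H_K}_2\le\tfrac{L_{xx}}{2}\norm{r}_2$, $\norm{\bar B-B_K}_2\le\tfrac{L_{xu}}{2}\norm{r}_2$ (the $\tfrac12$ from $\int_0^1(1-t)\,dt$) and $\norm{B_K^{T}H_K^{-1}}_2=\norm{\varphi(\xK,\u)}_2\le L_2$; collecting terms -- the only remaining freedom being which point one telescopes around, which fixes the precise form of $C$ -- gives a bound of order $\norm{r}_2^{2}$, which by the contraction decays at the rate $\omega^{2K}$ of (c).

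\emph{Part (b)} brings one new feature: the automatically differentiated $\Du\xK$ replaces $\varphi(\xK,\u)$ but is no longer equal to it. Differentiating $\xkp=\xk-\tau\grad{\x}{f}(\xk,\u)$ in $\u$ yields the affine recursion $\Du\xkp=\big(I-\tau\hess{\x}{f}(\xk,\u)\big)\Du\xk-\tau\grad{\x\u}{f}(\xk,\u)$, whose fixed point (obtained by freezing $\xk\equiv\xmin$) is exactly $\varphi(\xmin,\u)$ and which contracts with factor $\omega$; since its iteration matrix and forcing term drift from their limits by only $O(\omega^k)$ (Lipschitzness of the second derivatives), a geometric sum with a linearly growing coefficient yields $\norm{\Du\xK-\varphi(\xmin,\u)}_2=O(K\omega^{K})$ with per-step weight $\tau(L_{xu}+L_1L_{xx})$ -- whence the factor $L_xK$ in $C_K$ and the lost power of $\omega$. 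Re-running the computation of (c) with $\Du\xK$ in place of $\varphi(\xK,\u)$ then adds a single term $\le\norm{\Du\xK-\varphi(\xK,\u)}_2\,\norm{\bar H}_2\le L_x\norm{\Du\xK-\varphi(\xK,\u)}_2$; multiplying by the leftover $\norm{r}_2\le\omega^K\norm{\xz-\xmin}_2$ and collecting produces the bound of (b) with constant $C_K$ and rate $\omega^{2K-1}$. Throughout, the routine steps are Cauchy--Schwarz, the integral mean value estimates and the geometric series; the one genuinely technical point -- and the main obstacle -- is the quantitative control of the perturbed linear recursion for $\Du\xk$ in (b), namely that its $O(\omega^k)$ perturbations accumulate to no more than $O(K\omega^{K})$ with exactly the per-step weight occurring in $C_K$, which, together with the stationarity identity at $\xmin(\u)$, is what yields the super-efficient rates in (b) and (c).
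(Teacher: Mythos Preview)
The paper does not contain a proof of this theorem: it is quoted verbatim from Ablin, Peyr\'e and Moreau \cite{APM20} as background (``Ablin et al.~\cite{APM20} provide following error bounds\ldots'') and is used only for comparison with the dual approach developed later. There is therefore no proof in the present paper to compare your proposal against.

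That said, your sketch is the standard route and matches what one finds in \cite{APM20}: linear contraction of gradient descent under $m$-strong convexity and $L_x$-smoothness, a first-order mean-value estimate for (a), the second-order cancellation $\varphi(\xmin,\u)^{T}\hess{\x}{f}(\xmin,\u)+\grad{\x\u}{f}(\xmin,\u)^{T}=0$ for (c), and the perturbed affine recursion for $\Du\xk$ in (b). Two minor remarks. First, your bound in~(c) is naturally of order $\norm{\xK-\xmin}_2^{2}\le\omega^{2K}\norm{\xz-\xmin}_2^{2}$, i.e.\ quadratic in the initial error, whereas the stated bound is linear; on the compact set $D$ one factor of $\norm{\xz-\xmin}_2$ can be absorbed into the constant, which is how the discrepancy is reconciled. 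Second, as you already note, the exact constants $C$ and $C_K$ depend on which point one telescopes around; your constants differ in form from those stated (in particular you obtain $(L_{xu}+L_2L_{xx})/2$ in~(c) rather than $(L_{xu}+L_1L_{xx})/2+L_2L_x$), but the rates and the mechanism are the same.
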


\tref{thm:AAIG} shows faster convergence of automatic and implicit estimators as compared to analytical estimator. The automatic estimator is more stable than implicit estimator as depicted experimentally in \cite{APM20}. This makes the automatic method a strong contender for estimating $ \grad{}{p} $. It is also not computationally expansive thanks to reverse mode AD. The memory overhead is overcome by discarding the iterates $ \xk $ for $ k = 0, \dots, K-1 $ and using $ \xK $ only in all the calculations when going backward \cite{Chr94, MO20}. Ablin et al.~\cite{APM20} also study these methods under weaker conditions, for instance, when $ f(\cdot, \u) $ is $ \mu $-{\L}ojasiewicz \cite{AB09} which generalizes strong convexity. However their results depend on strong smoothness assumptions for $ f $.

\subsection{Problems with Direct Differentiation} \label{prob:DirDiff}

Obviously, the settings for which Ablin et al.~\cite{APM20} provide convergence rate guarantees is quite limited. We would like to emphasize the fact that differentiability of the parametric function $f$ is not required for that of the value function $ p $. In this section, we show with simple examples that the necessary conditions like differentiablity of the objective and the existence of the minimizer are the key disadvantages of the above methods.

\begin{example} \label{exmp:NonSmthToy}
Let $ f : \R \times \R \to \R $ be defined as:
\begin{equation*}
    f (x, u) = \exp(x) + \delta_{[u, +\infty)} (x) \,,
\end{equation*}
then \eqref{eq:AnG} and \eqref{eq:IG} fail to converge for all $ u \in \R $ while \eqref{eq:AuG} converges only when $ \xtk > u $ for all $ k $ (eventually) and $ \Dut \xtk $ converges to $ \Dut \xtmin(u) $.
\end{example}

\paragraph{Detail.} $ f $ is jointly convex in $ x $ and $ u $ and for all $ u \in \R, f(\cdot, u) $ is $ \exp(u) $-strongly convex and $ \xtmin : \R \to \R $ and $ p : \R \to \R $, given by $ \xtmin(u) = u $ and $ p(u) = \exp(u) $ respectively, are continuously differentiable on $ \R $. On the other hand, $ f $ is neither differentiable with respect to $ x $ nor $ u $ at $ (\xtmin(u), u) $ for any $ u \in \R $. To see why $ f $ is not differentiable with respect to $ u $, note that $ f $ is alternatively written as $ f(x, u) = \exp(x) + \delta_{(-\infty, x]} (u) $. The subdifferential of $ f $ with respect to $ u $ is $ \R_{+} $ when $ x = u $ and $ \{ 0 \} $ when $ x > u $. Thus when $ \xtk = u $ for some $ k \in \N $, none of the above methods is useful here. If $ \xtk > u $ for all $ k $, we get $ \gtk_{1} (u) = 0 $ and $ \gtk_{3} (u) = 0 $ since $ \partial f / \partial u (\xtk, u) = 0 $ and $ \partial^{2} f / \partial u^{2} (\xtk, u) = 0 $. The automatic estimator is given by $ \gtk_{2} (u) = \Dut\xtk (u) \exp (u) $ because $ \partial f / \partial x (\xtk, u) = \exp (u) $. It converges to $ p^{\prime} (u) = \exp (u) $ only if $ \Dut\xtk (u) $ converges to $ \Dut\xtmin (u) = 1 $.

The convergence of $ \Dut \xtk (u) $ to $ \Dut \xtmin (u) $ in \exref{exmp:NonSmthToy} is possible only under limited conditions which we do not establish here. The next example considers the non-smooth parametric objective of \eqref{eq:Lasso} in an analogue $ 1 $D setting.

\begin{example} \label{exmp:NonSmthLasso}
Let $ f : \R \times \R \to \R $ be defined as:
\begin{equation*}
    f (x, u) = \frac{1}{2} (a x - b)^{2} + \delta_{[-u, u]} (x) \,,
\end{equation*}
where $ a, b \in \R \backslash \{0\} $, then for all $ u \in (0, \abs{b/a}) $, \eqref{eq:AnG} and \eqref{eq:IG} fail to converge while \eqref{eq:AuG} converges only when $ \xtk \in (-u, u) $ for all $ k $ (eventually) and $ \Dut \xtk $ converges to $ \Dut \xtmin(u) $.
\end{example}

\paragraph{Detail.} $ f $ is jointly convex in $ x $ and $ u $ and for all $ u \in (0, \abs{b/a}), f(\cdot, u) $ is $ a^{2} $-strongly convex and we have $ \xtmin(u) = \sgn{b/a} u $ and $ p(u) = (a \xtmin (u) - b)^{2}/2 $. Since $ f(x, u) = (a x - b)^{2}/2 + \delta_{[\abs{x}, \abs{b/a})} (u) $ and $ \subdiff{u}{f} (x, u) = N_{[\abs{x}, \abs{b/a})} (u) $, $ f $ is not differentiable with respect to $ u $ at $ (\xtmin(u), u) $ for any $ u \in (0, \abs{b/a}) $. Given a sequence $ \xtk \in (-u, u) $ with limit $ \xtmin(u) $ we have $ \gtk_{1} = 0 $ and $ \gtk_{3} = 0 $ because $ \partial f / \partial u (\xtk, u) = 0 $ and $ \partial^{2} f / \partial u^{2} (\xtk, u) = 0 $. The automatic estimator $ \gtk_{2} (u) = \Dut\xtk (u) a (a \xtk - b) $ converges to $ p^{\prime} (u) = \Dut\xtmin (u) a (a \xtmin - b) $ when $ \Dut\xtk (u) $ converges to $ \Dut\xtmin (u) $.

\begin{example} \label{exmp:NoMin}
Let $ f : \R \times \U \to \R $ be defined as:
\begin{equation*}
    f(x, u) = \exp(x) + \frac{1}{2} u^{2} \,,
\end{equation*}
then \eqref{eq:AnG}, \eqref{eq:AuG} and \eqref{eq:IG} fail to converge for all $ u \in \R $.
\end{example}

\paragraph{Detail.} This case is obvious because $ \inf_{x} \exp(x) = 0 $ with minimum not attained, giving us $ p(u) = u^{2} / 2 $ and $ \argmin_{x} f(x, u) = \emptyset $.

% In Examples~\ref{exmp:NonSmthToy}, \ref{exmp:NonSmthLasso} and \ref{exmp:NoMin}, we demonstrate the failure of methods which rely on direct differentiation of the objective. In many cases these methods may still converge but very slowly and show an irregular manner due to the lack of theoretical evidence (see \sref{sec:Exp}). The implicit estimator fails to provide any results when the matrix $ \hess{\x}{f} (\cdot, \u) $ is not invertible in a neighborhood of $ \xmin(\u) $.

While one may still use the methods of \cite{APM20} or cvxpylayers \cite{AK17, AAB+19} to efficiently estimate $ \grad{}{p} $ in situations like those presented above, it should be noted that differentiating the solution mapping is not a strictly more general approach than differentiating the value function. This is due to the failure of the applicability of the chain rule for evaluating $ \nabla_{\u} [f(\xmin(\u), \u)] $ for non-smooth functions in general \cite{BP20}. (The concept of a subgradient is not defined for a vector-valued non-smooth function and must be replaced by graphical derivatives and coderivatives; see \cite[Section 9.D]{RW98} and the chain rule in \cite[Theorem 10.49]{RW98}). This calls for a theoretically justified approach for estimating $ \grad{}{p} $ beyond those which are currently available \cite{APM20, AK17, AAB+19}.
% This happens when either the minimum is not attained while $ p(\u) $ is still defined or the objective is non-smooth or $ \grad{\u}{f} $ is simply not defined at $ \xmin(\u) $.

\section{Dual Gradient Estimator} \label{sec:DualGrad}

The discussion in the previous section suggests that a different method is needed which is independent of directly differentiating the parametric objective function $ f $. Trading the differentiability assumption for a joint convexity assumption of $f$ in $(\x,\u)$, we invoke the powerful convex duality for computing derivative information of the value function $p$ in cases beyond differentiability of $f$. Moreover, the same statement provides an expression for the convex subdifferential of $p$. 

Denoting the convex conjugate of a function $p$ by 
\[
    p^*(\y) := \sup_{\u} \innerprod{\y}{\u} - p(\u) \,,
\]
and its biconjugate by $\pcc:=(p^*)^*$, the following result can be derived when strong duality, i.e., $\pcc=p$, holds. The dual of the problem defined in \eqref{eq:PrimalProb} is given by:
\begin{equation} \tag{$ \D $} \label{eq:DualProb}
    \pcc(\u) = \sup_{\y \in \RP} \innerprod{\u}{\y} - \fc(0, \y) \,.
\end{equation}
and for $ \u \in \ri{(\dom{p})} $ %\PO[inline]{TODO: Check if strong duality is enough, or do we additionally need the condition $ \u \in \ri (\dom{p})$ for equality? }
\begin{equation*}
    \subdiff{}{p} (\u) = \arg\max_{\y \in \RP} \innerprod{\u}{\y} - \fc(0, \y) \,.
\end{equation*}

When $ \subdiff{}{p} (\u) $ is single-valued, $ p $ is differentiable at $ \u $ and therefore solving \eqref{eq:DualProb} yields the gradient of the value function, which does not require differentiability of $ f $. These results rely on the following standard convex duality result, which we state from \cite[Theorem~4.1]{Dru20} \cite[Section~11.H]{RW98}.
\begin{theorem} \label{thm:Main}
For $ \X \subset \RN $ and $ \U \subset \RP $, let $ f : \X \times \U \to \Rext $ be a proper, lower semi-continuous and convex function. Then following are true for all $ \u \in \U $:
\begin{enumerate}[label=\textnormal{(\alph*)}]
    \item \textbf{Weak Duality:} $ \pcc(\u) \leq p(\u) $.
    \item \textbf{Subdifferential:} If $ p(\u) $ is finite, then:% the inclusion holds:
    \begin{equation} \label{eq:thm:Subd}
        \partial p(\u) \subset \argmax_{\y \in \RP} \innerprod{\u}{\y} - \fc(0, \y) \,.
    \end{equation}
    If in addition, the inclusion $ \u \in \ri (\dom{p}) $ holds, then \eqref{eq:thm:Subd} holds with equality.
    \item \textbf{Strong Duality:} If the subdifferential $ \partial p(\u) $ is nonempty, then the equality $ \pcc(\u) = p(\u) $ holds and the supremum $ \pcc(\u) $ is attained.
\end{enumerate}
\end{theorem}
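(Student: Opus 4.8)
The plan is to reduce the entire statement to one computation together with the Fenchel--Young (in)equality. The key identity is that the conjugate of the value function is $\pc(\y) = \fc(0,\y)$; granting this, (a)--(c) all follow from the textbook relations between a convex function, its biconjugate and its subdifferential. For the identity, after extending $f$ by $+\infty$ outside $\X\times\U$, I would compute directly from the definitions
\[
  \pc(\y) = \sup_{\u\in\RP}\Bigl[\innerprod{\y}{\u} - \inf_{\x\in\RN} f(\x,\u)\Bigr] = \sup_{(\x,\u)\in\RNXRP}\Bigl[\innerprod{(0,\y)}{(\x,\u)} - f(\x,\u)\Bigr] = \fc(0,\y)\,,
\]
the middle equality being just the rewriting of $-\inf_\x f(\x,\u)$ as $\sup_\x(-f(\x,\u))$. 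Conjugating once more gives $\pcc(\u) = \sup_\y\bigl[\innerprod{\u}{\y}-\fc(0,\y)\bigr]$, which is exactly \eqref{eq:DualProb}; so the argmax set in the theorem equals $\argmax_\z \innerprod{\u}{\z}-\pc(\z)$.

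Part (a) is then immediate from Fenchel--Young, $\pc(\y)+p(\u)\ge\innerprod{\y}{\u}$: rearranging gives $p(\u)\ge\innerprod{\u}{\y}-\pc(\y)$ for every $\y$, and taking the supremum over $\y$ yields $p(\u)\ge\pcc(\u)$; no regularity of $p$ enters here. For the inclusion in (b), I would use that when $p(\u)$ is finite the subgradient inequality rearranges into the equality case of Fenchel--Young, i.e.\ $\y\in\subdiff{}{p}(\u)\iff p(\u)+\pc(\y)=\innerprod{\u}{\y}$. For such a $\y$ this gives $\innerprod{\u}{\y}-\pc(\y)=p(\u)\ge\pcc(\u)=\sup_\z\bigl[\innerprod{\u}{\z}-\pc(\z)\bigr]$ by (a), so $\y$ attains the supremum and hence lies in $\argmax_\z\innerprod{\u}{\z}-\fc(0,\z)$. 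The same chain shows that as soon as $\subdiff{}{p}(\u)\neq\emptyset$ we have $\pcc(\u)=p(\u)$ with the dual supremum attained; since a nonempty subdifferential forces $p(\u)$ to be finite, this is precisely part (c).

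For the equality in (b), the extra assumption $\u\in\ri(\dom p)$ is what makes the reverse inclusion work. First I would note that, together with finiteness of $p(\u)$, it forces $p$ to be a proper convex function: $p$ is convex, being an infimal projection of the jointly convex $f$, and a convex function that attains $-\infty$ would attain it on all of $\ri(\dom p)$, contradicting $p(\u)\in\R$. For a proper convex function one has $\pcc=\mathrm{cl}\,p$ and $\mathrm{cl}\,p$ agrees with $p$ on $\ri(\dom p)$, so $\pcc(\u)=p(\u)$. Then for an arbitrary $\y\in\argmax_\z\innerprod{\u}{\z}-\fc(0,\z)$ we get $\innerprod{\u}{\y}-\pc(\y)=\pcc(\u)=p(\u)$, i.e.\ the Fenchel--Young equality $p(\u)+\pc(\y)=\innerprod{\u}{\y}$, hence $\y\in\subdiff{}{p}(\u)$ by the characterization used above; this is the reverse inclusion.

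The only steps that are not routine bookkeeping with Fenchel--Young are the two classical convex-analysis facts invoked for the equality in (b) — that a proper convex function agrees with its biconjugate (closure) on the relative interior of its domain, and is subdifferentiable there — so the main point to handle carefully is the reduction to the proper case and the correct citation of those results (e.g.\ from Rockafellar or \cite{RW98}). I expect this to be the only genuine obstacle; everything else is direct manipulation of conjugates and subgradients.
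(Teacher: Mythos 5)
Your proof is correct: reducing everything to the identity $\pc(\y)=\fc(0,\y)$ and then invoking the Fenchel--Young (in)equality, the subgradient--equality characterization, and the fact that a proper convex function agrees with its biconjugate (closure) on $\ri(\dom{p})$ is exactly the standard argument. The paper itself gives no proof of Theorem~\ref{thm:Main} --- it is quoted from \cite[Theorem~4.1]{Dru20} and \cite[Section~11.H]{RW98} --- and your derivation is essentially the same route taken in those references.
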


Therefore, \eqref{eq:DualProb} is key for computing the variation of $p$ with respect to $\u$. Our goal is reduced to the problem of solving \eqref{eq:DualProb}, for which the machinery of convex optimization can be invoked to state algorithms and convergence rates. Moreover, in contrast to the automatic differentiation strategy (backpropagation), there is no need to store the iterates, which dramatically reduces the memory requirements. 

Let $ \seq{\y}{K} $ be a sequence generated by an algorithm for solving \eqref{eq:DualProb}, we call the gradient computed by this method the \textbf{dual gradient estimator}:
\begin{equation} \tag{DG} \label{eq:DG}
    \gK_{4} (\u) = \yK \,.
\end{equation}
The dual estimator is computationally efficient since it requires solving an optimization problem and does not require computing any additional gradient and Hessian terms. The computational expenses depend only on the method used to solve the problem and the rate of convergence. Such an estimator also does not have a memory overhead like storing the iterates $ \seq{\y}{k} $.

\subsection{A Large Class of Parametric Optimization Problems}

As an application of our approach, we consider the following class of parametric optimization problem:% that are ubiquitous in Machine Learning
%We now define the objective $ f : \RNXRP \to \Rext $ as:
\begin{equation} \tag{$\RW_{f}$} \label{eq:Roc:obj}
    f(\x, \u) = \innerprod{\c}{\x} + h(\b - A\x + \u) + k(\x) \,,
\end{equation}
where $ h : \RP \to \Rext $ and $ k : \RN \to \Rext $ are proper, lower semi-continuous and convex, $ A : \RN \to \RP $ is a linear map and $ (\c, \b) \in \RNXRP $. The convex conjugate of $ f $ is given by:
\begin{equation*}
    \fc(\v, \y) = -\innerprod{\b}{\y} + \kc(\Aadj\y - \c + \v) + \hc(\y) \,,
\end{equation*}
which yields the conjugate of the value function as:
\begin{equation} \tag{$\RW_{\pc}$} \label{eq:Roc:vfc}
    \pc(\y) = -\innerprod{\b}{\y} + \kc(\Aadj\y - \c) + \hc(\y) \,.
\end{equation}
Therefore, in order to compute the variation of $p$ with respect to $\u$ by using \eqref{eq:thm:Subd}, we must solve a problem of the form:
\begin{equation} \label{eq:dual-grad-estimator-problem}
    \min_{\y\in\RP} \kc(\Aadj\y - \c + \v) + \hc(\y) - \innerprod{\b + \u}{\y} \,.
\end{equation}

In the following section, depending on the properties of $k$, $h$, and $A$, we provide algorithms and convergence rates for solving \eqref{eq:dual-grad-estimator-problem} and, hence, for approximating the variation of the value function $p$. As a generic algorithm for solving \eqref{eq:dual-grad-estimator-problem}, we mention the Primal--Dual Hybrid Gradient Algorithm by Chambolle and Pock~\cite{CP11} here.  A sufficient condition for uniqueness of solution of \eqref{eq:dual-grad-estimator-problem} is strong convexity of $ \hc $ which follows from the Lipschitz continuity of $ \grad{}{h} $. For a weaker condition, we state the following result:

% \begin{definition}[{{\cite[Definition 1.16]{RW98}}}]
%     A function $ f : \RNXRP \to \Rext $ with values $ f(\x, \u) $ is level-bounded in $ \x $ locally uniformly in $ \u $ if for each $ \bar{\u} \in \RP $ and $ \alpha \in \R $ there is a neighborhood $ V \in \mathcal{N} (\bar{\u}) $ along with a bounded set $ B \subseteq \RN $ such that $ \{ \x : f(\x, \u) \leq \alpha \} \subseteq B $ for all $ \u \in V $; or equivalently, there is a neighborhood $ V \in \mathcal{N} (\bar{\u}) $ such that the set $ \{ (\x, \u) : \u \in V, f(\x, \u) \leq \alpha \} $ is bounded in $ \RNXRP $.
% \end{definition}

% \begin{proposition}
%     Let $ h, k, A, \b $ and $ \c $ in \eqref{eq:Roc:obj} be such that $ f $ is level-bounded in $ \x $ locally uniformly in $ \u $ and $ \hc $ is strictly convex, then $ \subdiff{}{p} (\u) = \{ \grad{}{p} (\u) \} $ is single-valued for all $ \u \in \ri (\dom{p}) $.
% \end{proposition}

% \begin{proof}
% The uniform level boundedness of $ f $ implies that $ p $ is proper and lower semi-continuous \cite[Theorem~1.17]{RW98}. Also, $ p $ is convex since $ f $ is convex. Therefore, for every $ \u \in \ri (\dom p) $, $ \subdiff{}{p} (\u) $ is non-empty \cite[Theorem~23.4]{Roc70}. The single-valuedness of $ \subdiff{}{p} (\u) $ then follows from the strict convexity of $ \hc $.
% \end{proof}

\begin{proposition}
    Let $ h, k, A $ and $ \c $ in \eqref{eq:Roc:obj} be such that $ h $ is differentiable on $ \intr{(\dom{h})} $ and there exist $ (\x, \u) \in \dom{k} \times \intr{(\dom{h})} $ with $ \Aadj \grad{}{h} (\u) - \c \in \subdiff{}{k} (\x) $, then $ \subdiff{}{p} (\u) = \{ \grad{}{p} (\u) \} $ is single-valued for all $ \u \in \ri (A\dom{k} + \dom{h} - \b) $.
\end{proposition}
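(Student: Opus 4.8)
The plan is to reduce the statement to the uniqueness of the maximiser in the dual subdifferential formula of \tref{thm:Main}(b), and then to extract that uniqueness from the differentiability of $h$ through Fenchel--Young duality. Since $\dom{p}=A\dom{k}+\dom{h}-\b$, the standard calculus of relative interiors yields $\ri(\dom{p})=A\ri(\dom{k})+\ri(\dom{h})-\b$, i.e.\ exactly the set in the conclusion. Writing $\y_0:=\grad{}{h}(\u_0)$ for the pair $(\x_0,\u_0)\in\dom{k}\times\intr(\dom{h})$ supplied by the hypothesis, the inclusion $\Aadj\y_0-\c\in\subdiff{}{k}(\x_0)$ is equivalent to $\x_0\in\subdiff{}{\kc}(\Aadj\y_0-\c)$, and $\y_0=\grad{}{h}(\u_0)$ gives $\u_0\in\subdiff{}{\hc}(\y_0)$; hence $\Aadj\y_0-\c\in\dom{\kc}$ and $\y_0\in\dom{\hc}$, so $\pc$ in \eqref{eq:Roc:vfc} is proper. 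With weak duality $p\ge\pcc$ and $\dom{p}\neq\emptyset$ this makes $p$ a proper convex function, so for $\u\in\ri(\dom{p})$ the value $p(\u)$ is finite and $\subdiff{}{p}(\u)\neq\emptyset$, and \tref{thm:Main}(b) applies with equality; by \eqref{eq:Roc:vfc} it reads
\[
    \subdiff{}{p}(\u)=\argmax_{\y\in\RP}\ \innerprod{\b+\u}{\y}-\kc(\Aadj\y-\c)-\hc(\y)\,,
\]
so it remains to show this argmax is a singleton.

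Suppose to the contrary that distinct $\y_1,\y_2$ both attain the maximum. Then the concave dual objective takes the same maximal value at $\y_1$ and $\y_2$, hence is constant on $[\y_1,\y_2]$; being the sum of the concave maps $\y\mapsto-\kc(\Aadj\y-\c)$, $\y\mapsto-\hc(\y)$ and an affine map, each of $\y\mapsto\kc(\Aadj\y-\c)$ and $\y\mapsto\hc(\y)$ must be affine on $[\y_1,\y_2]$. Using the optimality of $\y_1$ with the sum and chain rules (for which the feasible pair provides the constraint qualification), $\subdiff{}{\hc}(\y_1)\neq\emptyset$; if $\w\in\subdiff{}{\hc}(\y_1)$, affineness of $\hc$ along $[\y_1,\y_2]$ turns the subgradient inequality between $\y_1$ and $\y_2$ into an equality, so $\w\in\subdiff{}{\hc}(\y_2)$ as well. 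By conjugacy $\y_1,\y_2\in\subdiff{}{h}(\w)$, so $\subdiff{}{h}(\w)$ is not single-valued; since $h$ is differentiable on $\intr(\dom{h})$, this forces $\w\notin\intr(\dom{h})$, and because $\w\in\subdiff{}{\hc}(\y_1)\subset\overline{\dom{h}}$ it must be a boundary point of $\dom{h}$.

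To conclude, one has to contradict $\w$ being a boundary point. From $\y_1\in\subdiff{}{p}(\u)$ and $\w\in\subdiff{}{\hc}(\y_1)$, after expanding $\pc$ by \eqref{eq:Roc:vfc} and applying the Fenchel equalities, I would argue that $\w$ can be taken of the form $\b-A\x^{*}+\u$ for a solution $\x^{*}$ of the primal optimality system $\Aadj\y_1-\c\in\subdiff{}{k}(\x^{*})$; then $\x^{*}\in\dom{k}$, and combined with $\u\in\ri(\dom{p})=A\ri(\dom{k})+\ri(\dom{h})-\b$ this should force $\w=\b-A\x^{*}+\u$ into $\ri(\dom{h})=\intr(\dom{h})$ (these coincide because $\intr(\dom{h})\neq\emptyset$), contradicting the previous paragraph. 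Hence the argmax is a singleton, $p$ is differentiable at $\u$, and $\subdiff{}{p}(\u)=\{\grad{}{p}(\u)\}$.

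The main obstacle is precisely this last step: keeping the shared subgradient $\w$ — equivalently the $\dom{h}$-component of the decomposition of $\u$ — inside $\intr(\dom{h})$. It is automatic when $h$ is essentially smooth, because then $\subdiff{}{h}$ vanishes off $\intr(\dom{h})$, so $\y_1,\y_2\in\subdiff{}{h}(\w)$ already places $\w$ in $\intr(\dom{h})$, where differentiability of $h$ contradicts $\y_1\neq\y_2$ at once and the argument collapses to the second paragraph. Under the weaker assumption that $h$ is merely differentiable on $\intr(\dom{h})$, the care is all in the relative-interior and constraint-qualification bookkeeping: the feasible pair $(\x_0,\u_0)$ with $\u_0\in\intr(\dom{h})$ is what justifies the sum and chain rules above and what pins the $\dom{h}$-component of $\u$ to the interior, and one still has to verify that affineness of the two summands along $[\y_1,\y_2]$ really allows the matching choice of representatives $\x^{*}$ and $\w$.
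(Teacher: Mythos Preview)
Your skeleton matches the paper's proof exactly: use the hypothesis to show $\dom\pc\neq\emptyset$, identify $\dom p=A\dom k+\dom h-\b$, conclude $\partial p(\u)\neq\emptyset$ on $\ri(\dom p)$, and then derive uniqueness of the dual maximiser from the differentiability of $h$. The only difference is packaging: the paper dispatches the uniqueness step in one sentence by citing \lref{lem:basic}\ref{basic:strict:diff} (differentiability of $h$ on $\intr\dom h$ $\Rightarrow$ $\hc$ strictly convex on convex subsets of $\nabla h(\intr\dom h)$), while you unfold that statement by hand through the shared-subgradient argument.

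The obstacle you isolate in your final paragraph --- keeping the common $\w\in\partial\hc(\y_1)\cap\partial\hc(\y_2)$ inside $\intr(\dom h)$, equivalently ensuring that the maximisers lie in $\nabla h(\intr\dom h)$ --- is genuine, and the paper's one-line citation does not address it either: the lemma only delivers strict convexity of $\hc$ on convex subsets of $\nabla h(\intr\dom h)$, not on all of $\dom\hc$, and the paper never verifies the maximisers sit there. Your attempted repair via $\w=\b-A\x^{*}+\u$ does not close the gap, because the decomposition $\u=A\xi+\eta-\b$ with $\eta\in\intr\dom h$ furnished by $\u\in\ri\dom p$ need not coincide with the one produced by the optimality system (you would need $\x^{*}=\xi$, which nothing forces). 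As you observe, the issue vanishes when $h$ is essentially smooth --- which covers every concrete instance the paper considers (squared norm, Huber) --- but under the bare hypothesis ``$h$ differentiable on $\intr\dom h$'' both your argument and the paper's are, strictly speaking, incomplete.
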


% $ \textnormal{int}(\dom{h}) $

\begin{proof}
    The condition $ \Aadj \grad{}{h} (\u) - \c \in \subdiff{}{k} (\x) $ guarantees the existence of some $ \y \in \dom{\hc} $ with $ \Aadj \y - \c \in \dom{\kc} $. In such case, the expressions $ \hc(\y) $ and $ \kc(\Aadj \y - \c) $ are finite-valued and $ \dom{\pc} $ is non-empty. Since $ p $ is proper, lower semi-continuous and convex \cite[Theorem~3.101]{Hoh19}, for every $ \u \in \ri (\dom p) $ with $ \dom{p} = A\dom{k} + \dom{h} - \b $ \cite[Example~11.41]{RW98}, $ \subdiff{}{p} (\u) $ is non-empty \cite[Theorem~23.4]{Roc70}. The single-valuedness of $ \subdiff{}{p} (\u) $ then follows from the strict convexity of $ \hc $ (see \lref{lem:basic}\ref{basic:strict:diff}).
\end{proof}

To understand how this works we consider the example where $ h = \norm{\cdot}_{2}^{2}/2 $ and $ k = \lambda\norm{\cdot}_{2}^{2}/2 + \gamma \norm{\cdot}_{1} $ for $ \lambda > 0 $ and $ \gamma \geq 0 $. By choosing $ \u = 0 $ and $ \x $ as:
\begin{equation*}
    \x_{i} = \begin{cases}
      (-\c_{i} - \gamma)/\lambda &, \qquad\enspace\; \c_{i} < -\gamma \\[5pt]
      \qquad 0 &, -\gamma \leq \c_{i} \leq \gamma \\[5pt]
      (-\c_{i} + \gamma)/\lambda &, \enspace\; \gamma < \c_{i} \,,
  \end{cases}
\end{equation*}
we observe that $ \Aadj\grad{}{h}(\u) - \c = -\c \in \subdiff{}{k} (\x) $.

Parametric Optimization problems of the form \eqref{eq:Roc:obj} are ubiquitous in Machine Learning, Computer Vision, and Signal Processing. In Signal and Image Processing, the parameter $ \u $ represents the observed variable while the mapping $ A $ represents the operation performed on the optimal hidden variable $ \x $ (which is to be determined) to obtain the observed variable. In Machine Learning, $ \u $ is the target or the label vector, $ A $ represents the feature matrix obtained from the independent variable and $ \x $ denotes the weights of the mapping to be learned which fits the training set $ (A, \u) $. In this model, $ h $ measures the dissimilarity between $ A\x $ and $ \u $. The second term puts the penalty on $ \x $ and therefore indicates a prior information of the optimal $ \x $ which is necessary when $ P < N $. In many applications like supervised learning, image denoising and segmentation, $ N \leq P $, while in those like compressed sensing and deconvolution, $ N > P $.

Other than the above applications, \eqref{eq:Roc:obj} also generalizes the classical infimal convolution \cite{Roc70}. For example, such expressions occur in Image Processing applications in the context of regularization via Total Generalized Variation \cite{BKP10}. Moreover, the Moreau envelope \cite{RW98} of a non-smooth function is of the presented form and is employed for solving non-smooth optimization problems and is key for interpretation of many convex optimization algorithms such as proximal splitting methods \cite{LM79, CP11a}. Also, the penalty approaches for approximating the minimization of $ f(\x)+g(\x) $ via $ \min_{\x} ( f(\x) + \min_{\z} g(\z) + 1/2\norm{\x-\z}^{2} ) $ have the same form, which shows relations to alternating minimization approaches and has been employed in real world machine learning problems \cite{LWC19}.

\subsection{Rate of Convergence} \label{DG:RoC}

By invoking convex duality, as described in the previous section, the computation of the value function's variation is reduced to solving problems of type \eqref{eq:dual-grad-estimator-problem} for which a large literature of optimization algorithms is available for several special cases.

We consider the following situations:
\begin{enumerate}[label=\textnormal{(\alph*)}]
    \item Let \eqref{eq:dual-grad-estimator-problem} be a quadratic problem with matrix $ Q \in \R^{P \times P} $ and let $ L = \lambda_{\max} (Q) $ and $ m = \lambda_{\min} (Q) $, then \eqref{eq:DG} computed by using conjugate-gradient method converges like $ \BigOh (\omega^{K}) $ with $ \omega \coloneqq (\sqrt{L} - \sqrt{m})/(\sqrt{L} + \sqrt{m}) $ \cite[Section 1.6]{Ber99}.
    
    \textbf{Discussion.} Depending on whether $ P $ is smaller (resp. larger) than $ N $, this rate is better (resp. worse) than those provided in \tref{thm:AAIG} (see the first column of \fref{fig:Exp} for a comparison).
    
    \item Let $ \hc $ be possibly non-smooth with efficiently computable proximal mapping and $ \kc \circ \Aadj $ has an $ L $-Lipschitz continuous gradient then the following are true for solving \eqref{eq:dual-grad-estimator-problem} by using ISTA and accelerated proximal gradient descent (FISTA):
    \begin{itemize}
        \item \eqref{eq:DG} converges with ISTA \cite[Theorem~4.9]{CP16} and FISTA \cite[Theorem~3]{CD15} to $ \grad{}{p} (\u) $.
        \item If $ \hc $ and $ \kc \circ \Aadj $ are strongly convex with parameters $ \delta \geq 0 $ and $ \gamma \geq 0 $ and $ \mu = \delta + \gamma > 0 $, \eqref{eq:DG} converges to $ \grad{}{p} (\u) $ like $ \BigOh (\omega_{1}^{K}) $ with ISTA \cite[Theorem~4.9]{CP16} and like $ \BigOh (\omega_{2}^{K}) $ with FISTA \cite[Theorem~4.10]{CP16} where $ \omega_{1} = (1 - \tau \gamma)/(1 + \tau \delta) $ and $ \omega_{2} = 1 - \sqrt{\tau \mu / (1 + \tau \delta)} $.
    \end{itemize} \label{enum:(F)ISTA}
    \textbf{Discussion.} This general setting is beyond the theory that is provided by \tref{thm:AAIG}.
    
    \item Let $ \hc $ and $ k $ be possibly non-smooth and their respective proximal mappings can be computed efficiently, then the following are true for solving \eqref{eq:dual-grad-estimator-problem} by using Primal--Dual Hybrid Gradient Algorithm with $ L = \norm{\Aadj} $:%, $ \tau\sigma L^{2} < 1 $ and $ \theta \in [0, 1] $:
    \begin{itemize}
        \item \eqref{eq:DG} converges to $ \grad{}{p} (\u) $ \cite[Theorem~5.1]{CP16}.
        \item If either $ \hc $ or $ k $ are strongly convex, then \eqref{eq:DG} converges to $ \grad{}{p} (\u) $ like $ \BigOh (1/K^{2}) $ to $ \grad{}{p} (\u) $ \cite[Theorem~2]{CP11}.
        \item If $ \hc $ and $ k $ are both strongly convex with parameters $ \delta $ and $ \gamma $ respectively, then \eqref{eq:DG} converges to $ \grad{}{p} (\u) $ like $ \BigOh (\omega^{K/2}) $ to $ \grad{}{p} (\u) $ with $ \omega = (1 + \theta)/(2 + \mu) $ and $ \mu = 2\sqrt{\delta\gamma} / L $ \cite[Theorem~3]{CP11}.
    \end{itemize} \label{enum:PDHG}
    \textbf{Discussion.} Similarly, this setting is more general than that of \tref{thm:AAIG}.
\end{enumerate}

\paragraph{Remark.} For non-strongly convex settings in \eqref{eq:dual-grad-estimator-problem}, the sequence of values $ \pc(\yK) - \innerprod{\u}{\yK} $ converges like $ \BigOh (1/K) $ with ISTA \cite[Theorem~4.9]{CP16} and PDHG \cite[Theorem~1]{CP11} and like $ \BigOh (1/K^{2}) $ with FISTA \cite[Theorem~4.4]{BT09} i.e., we have a potentially accelerated rate of convergence of the objective values. However this rate does not directly translate to a convergence rate of the iterates and hence, to a rate for the convergence of the dual gradient estimator. Such a conclusion requires additional properties of the optimization problem, such as local strong convexity, error bounds, or growth conditions \cite{AB09, FGP15, DL18}.

In order to recognize the potential of the dual gradient approach based on properties of the primal functions in \eqref{eq:Roc:obj}, we trace the conditions for the various convergence rates listed above back to properties of the primal functions. These results are based on the following lemma. Their proofs can be found in most standard texts on Convex Analysis, e.g., \cite{HL12} or \cite{Roc70}.

\begin{lemma} \label{lem:basic}
Let $ g, h : \RP \to \Rext $ be proper, lower semi-continuous and convex functions, $ \d \in \RP $ and $ B : \RN \to \RP $ be a linear mapping. Let $ l : \RN \to \Rext $ be defined by $ l(\x) = g(B\x + \d) $. Then the following results hold:
\begin{enumerate}[label=\textnormal{(\alph*)}]
    \item If $ g $ is $ m_{g} $-strongly convex on $ \RP $ for $ m_{g} \geq 0 $, then $ l $ is $ \lambda_{\min} (\Badj B) m_{g} $-strongly convex on $ \RN $.
    \item If $ g $ and $ h $ are strongly convex on $ \RP $ with parameters $ m_{g} $ and $ m_{h} $ respectively, then $ g + h $ is $ m_{g} + m_{h} $-strongly convex on $ \RP $.
    \item If $ g $ has an $ L_{g} $-Lipschitz continuous gradient on $ \RP $ for $ L_{g} \in (0, +\infty) $, then $ l $ has a $ \lambda_{\max} (\Badj B) m_{g} $-Lipschitz continuous gradient on $ \RN $.
    \item If $ g $ and $ h $ have Lipschitz continuous gradients on $ \RP $ with parameters $ L_{g} $ and $ L_{h} $ respectively, then $ g + h $ has an $ L_{g} + L_{h} $-Lipschitz continuous gradient on $ \RP $.
    % \item If $ g $ is strictly convex, then $ \intr{\dom{\gc}} \neq \emptyset $ and $ \gc $ is continuously differentiable on $ \intr{\dom{\gc}} $.
    \item If $ g $ is differentiable on $ \Omega \coloneqq \intr{\dom{g}} $, then $ \gc $ is strictly convex on each convex subset $ C \subset \grad{}{g} (\Omega) $. \label{basic:strict:diff}
    \item $ g $ is $ m_{g} $-strongly convex on $ \RP $ if and only if $ \gc $ has a $ 1/m_{g} $-Lipschitz continuous gradient on $ \RP $. \label{basic:strong:lip}
    \item $ g $ has an $ L_{g} $-Lipschitz continuous gradient on $ \RP $ if and only if $ \gc $ is $ 1/L_{g} $-strongly convex on $ \RP $. \label{basic:lip:strong}
\end{enumerate}
\end{lemma}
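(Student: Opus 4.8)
The plan is to treat the seven items in three groups. Items (a)--(d) are immediate from the definitions, item (e) is a one-line consequence of the Fenchel--Young inequality, and items (f)--(g) are the classical conjugacy duality between strong convexity and Lipschitz smoothness of the gradient; since $g$ is proper, lower semi-continuous and convex we have $g^{**}=g$, which will reduce the four implications in (f)--(g) to two.

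For (a), write $g=\varphi+\frac{m_{g}}{2}\norm{\cdot}^{2}$ with $\varphi$ convex; then $l(\x)=\varphi(B\x+\d)+\frac{m_{g}}{2}\norm{B\x+\d}^{2}$, where the first summand is convex (a convex function precomposed with an affine map) and the Hessian of the second summand is $m_{g}\Badj B$, whose eigenvalues are all at least $m_{g}\lambda_{\min}(\Badj B)$, so $l-\frac{m_{g}\lambda_{\min}(\Badj B)}{2}\norm{\cdot}^{2}$ is convex. Item (b) is the observation that $(g+h)-\frac{m_{g}+m_{h}}{2}\norm{\cdot}^{2}$ is the sum of two convex functions. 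For (c), use $\grad{}{l}(\x)=\Badj\,\grad{}{g}(B\x+\d)$ and $\norm{B}=\norm{\Badj}=\sqrt{\lambda_{\max}(\Badj B)}$ to bound $\norm{\grad{}{l}(\x_{1})-\grad{}{l}(\x_{2})}\le\norm{\Badj}\,L_{g}\,\norm{B(\x_{1}-\x_{2})}\le\lambda_{\max}(\Badj B)\,L_{g}\,\norm{\x_{1}-\x_{2}}$; and (d) is the triangle inequality applied to $\grad{}{(g+h)}=\grad{}{g}+\grad{}{h}$.

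For (e), fix $\y_{1},\y_{2}\in C$ with $\y_{1}\neq\y_{2}$, take $\lambda\in(0,1)$, and set $\y_{\lambda}=\lambda\y_{1}+(1-\lambda)\y_{2}\in C$. Since $C\subseteq\grad{}{g}(\Omega)$, for every $\y\in C$ there is $\x(\y)\in\Omega$ with $\grad{}{g}(\x(\y))=\y$, and the Fenchel--Young equality gives $\gc(\y)=\innerprod{\x(\y)}{\y}-g(\x(\y))$; in particular $\gc$ is finite on $C$. Writing $\x_{\lambda}:=\x(\y_{\lambda})$ and taking the $\lambda,(1-\lambda)$ convex combination of the Fenchel--Young \emph{inequalities} $\gc(\y_{i})\ge\innerprod{\x_{\lambda}}{\y_{i}}-g(\x_{\lambda})$, then using $\gc(\y_{\lambda})=\innerprod{\x_{\lambda}}{\y_{\lambda}}-g(\x_{\lambda})$, gives $\lambda\gc(\y_{1})+(1-\lambda)\gc(\y_{2})\ge\gc(\y_{\lambda})$; equality would force equality in both Fenchel--Young inequalities, i.e.\ $\y_{1},\y_{2}\in\subdiff{}{g}(\x_{\lambda})=\{\grad{}{g}(\x_{\lambda})\}=\{\y_{\lambda}\}$ because $\x_{\lambda}\in\Omega$, contradicting $\y_{1}\neq\y_{2}$. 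Hence the inequality is strict, which is strict convexity of $\gc$ on $C$.

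For (f)--(g), biconjugation ($g^{**}=g$) reduces all four implications to two statements: (i) the conjugate of an $m$-strongly convex function has a $\frac{1}{m}$-Lipschitz gradient, and (ii) the conjugate of a function with an $L$-Lipschitz gradient is $\frac{1}{L}$-strongly convex (applying (i) to $g$ and to $\gc$ gives the forward direction of (f) and the backward direction of (g), and symmetrically for (ii)). For (i), write the $m$-strongly convex function as $\varphi+\frac{m}{2}\norm{\cdot}^{2}$ with $\varphi$ proper, lower semi-continuous and convex; its conjugate is the infimal convolution of $\varphi^{*}$ with $\frac{1}{2m}\norm{\cdot}^{2}$, i.e.\ the Moreau envelope of $\varphi^{*}$ of parameter $m$, which is finite and differentiable on $\RP$ with a $\frac{1}{m}$-Lipschitz gradient. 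For (ii), take $\y_{0}\in\dom{\gc}$ and $\x_{0}\in\subdiff{}{\gc}(\y_{0})$ (equivalently $\grad{}{g}(\x_{0})=\y_{0}$); evaluating the supremum defining $\gc(\y)$ at $\x_{0}+\frac{1}{L}(\y-\y_{0})$ and applying the descent lemma $g(\x)\le g(\x_{0})+\innerprod{\grad{}{g}(\x_{0})}{\x-\x_{0}}+\frac{L}{2}\norm{\x-\x_{0}}^{2}$, together with the Fenchel--Young equality at $(\x_{0},\y_{0})$, yields $\gc(\y)\ge\gc(\y_{0})+\innerprod{\x_{0}}{\y-\y_{0}}+\frac{1}{2L}\norm{\y-\y_{0}}^{2}$ for all $\y$, which is $\frac{1}{L}$-strong convexity of $\gc$. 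The only non-mechanical point, and hence the main obstacle, is knowing that the conjugate is differentiable, respectively subdifferentiable, exactly where these arguments use it, and that the subgradient relation inverts the gradient; these are the standard structural facts (the conjugate of a quadratically perturbed convex function is a $C^{1,1}$ Moreau envelope, and a $C^{1,1}$ function has an everywhere-subdifferentiable conjugate with $\subdiff{}{\gc}=(\grad{}{g})^{-1}$) carried out in the cited convex-analysis references, everything else being routine.
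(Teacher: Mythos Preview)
Your proof is correct. The paper does not actually prove this lemma: immediately before the statement it says ``Their proofs can be found in most standard texts on Convex Analysis, e.g., \cite{HL12} or \cite{Roc70}'' and gives no argument of its own. Your sketch supplies exactly the standard arguments those references contain---the affine-composition and sum rules for (a)--(d), the Fenchel--Young equality/strict-inequality argument for (e), and the Moreau-envelope/descent-lemma pair for the two directions in (f)--(g), tied together by $g^{**}=g$. The only places that require care are the ones you flag at the end (nonemptiness of $\subdiff{}{\gc}$ on $\ri(\dom{\gc})$ and the inversion $\subdiff{}{\gc}=(\grad{}{g})^{-1}$), and those are precisely the structural facts provided by the cited texts, so there is no gap. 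Incidentally, you silently corrected the paper's typo in (c), where the constant should read $\lambda_{\max}(\Badj B)\,L_{g}$ rather than $\lambda_{\max}(\Badj B)\,m_{g}$.
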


Let us look at \eqref{eq:Roc:obj} when the regularity conditions given in \tref{thm:AAIG} are satisfied for $ f $. Let $ h $ and $ k $ be strongly convex with parameters $ m_{h} > 0 $ and $ m_{k} > 0 $ respectively and twice differentiable with Lipschitz continuous first and second derivatives. Let $ L_{h} $ and $ L_{k} $ be Lipschitz constants of $ \grad{}{h} $ and $ \grad{}{k} $ and let $ L_{A} = \lambda_{\max} (\Aadj A), m_{p} = \lambda_{\min} (\Aadj A) $ and $ m_{d} = \lambda_{\min} (A \Aadj) $ then $ f(\cdot, \u) $ is $ m_{h} m_{p} + m_{k} $-strongly convex and has an $ L_{h}L_{A} + L_{k} $-Lipschitz continuous gradient. Using these parameters, the optimal convergence rate for gradient descent is given by $ (L - m)/(L + m) $ \cite{Pol87}, which along with \tref{thm:AAIG} gives us the rates for the analytical, automatic and implicit estimators as $ \BigOh (\omega_{p}^{K}), \BigOh (K\omega_{p}^{2K}) $ and $ \BigOh (\omega_{p}^{2K}) $ respectively where we have:
\begin{equation*}
    \omega_{p} = \frac{(L_{h} L_{A} - m_{h} m_{p}) + (L_{k} - m_{k})}{(L_{h} L_{A} + m_{h} m_{p}) + (L_{k} + m_{k})} \,.
\end{equation*}
The strong convexity parameters of $ \kc \circ \Aadj $ and $ \hc $ are $ m_{d}/L_{k} $ and $ 1/L_{h} $. The Lipschitz constants of gradients of these functions are $ L_{A}/m_{k} $ and $ 1/m_{h} $. These parameters similarly give us the convergence rate for the dual estimator as $ \BigOh (\omega_{d}^{K}) $ for:
\begin{equation*}
    \omega_{d} = \frac{L_{h} m_{h} (L_{k}L_{A} - m_{k} m_{d}) + L_{k} m_{k} (L_{h} - m_{h})}{L_{h} m_{h} (L_{k}L_{A} + m_{k} m_{d}) + L_{k} m_{k} (L_{h} + m_{h})} \,.
\end{equation*}
Assuming $ A $ is full rank, the convergence rates depend on whether $ P $ is larger or smaller than $ N $.

The condition of strong convexity of $ h $ or $ k $ can be relaxed to non-strong convexity. The expression for convergence rate for primal problem will stay the same with $ m_{h} $ or $ m_{k} $ set to $ 0 $. For the dual problem, we make use of the results listed in \ref{enum:(F)ISTA} and \ref{enum:PDHG} to compute the rate. We note that the theoretical guarantees for the primal gradient estimators are difficult to establish beyond the strong convexity and twice continuous differentiability of $ f $ in \eqref{eq:Roc:obj}. On the other hand, the dual gradient estimator is quite powerful as it converges in a very broad setting and the convergence rates are theoretically justified.

\section{Experiments} \label{sec:Exp}

We compare the performance of the four different gradient estimators, i.e., \eqref{eq:AnG}, \eqref{eq:AuG}, \eqref{eq:IG} and \eqref{eq:DG} for estimation of $ \grad{}{p} (\u) $ in different settings. Therefore we fix $ N $ and run these methods for different values of $ P $ and for different choices of $ h $ and $ k $ in \eqref{eq:Roc:obj}. Changing $ P $ will affect $ L_{A}, \lambda_{\min} (\Aadj A) $ and $ \lambda_{\min} (A \Aadj) $ while changing $ h $ and $ k $ will modify $ L_{h}, L_{k}, m_{h} $ and $ m_{k} $. This also includes cases of non-differentiability of $ k $ and non-strong convexity of $ h $. For each problem and for each $ P $, we generate error plots of the sequences $ \gn_{i} $ for a given $ \u $. Since the convergence rates for methods \eqref{eq:AnG}, \eqref{eq:AuG} and \eqref{eq:IG} depend on that of the original sequence, we also show the plots for $ \xn $ for each of the examples.

We consider the following four examples to experimentally verify our observations:
\begin{equation} \label{eq:Exmps}
    \begin{aligned}
        f_{1}(\x, \u) &= \frac{1}{2} \norm{\u - A\x}_{2}^{2} + \frac{\lambda}{2} \norm{\x}_{2}^{2} \\
        f_{2}(\x, \u) &= h_{\delta}(\u - A\x) + \frac{\lambda}{2} \norm{\x}_{2}^{2} \\
        f_{3}(\x, \u) &= \frac{1}{2} \norm{\u - A\x}_{2}^{2} + \frac{\lambda}{2} \norm{\x}_{2}^{2} + \gamma \norm{\x}_{1} \\
        f_{4}(\x, \u) &= h_{\delta}(\u - A\x) + \frac{\lambda}{2} \norm{\x}_{2}^{2} + \gamma \norm{\x}_{1} \,,
    \end{aligned}
\end{equation}
where $ h_{\delta} : \RP \to \R $ in second and fourth equations in \eqref{eq:Exmps} is the  function defined by:
\begin{equation*}
    h_{\delta}(\u) \coloneqq \begin{cases} 
      \quad\; \frac{1}{2}\norm{\u}_{2}^{2} &, \quad \norm{\u}_{2} \leq \delta \\[10pt]
      \delta \Big( \norm{\u}_{2} - \frac{\delta}{2} \Big) &, \quad \norm{\u}_{2} > \delta \,.
  \end{cases}
\end{equation*}
The conjugate of the corresponding value functions is given by:
\begin{align}
    \pc_{1}(\y) &= \frac{1}{2\lambda} \norm{A^{T}\y}_{2}^{2} + \frac{1}{2} \norm{\y}_{2}^{2} \nonumber \\
    \pc_{2}(\y) &= \frac{1}{2\lambda} \norm{A^{T}\y}_{2}^{2} + \hc_{\delta}(\y) \nonumber \\
    \pc_{3}(\y) &= \kc(A^{T}\y) + \frac{1}{2} \norm{\y}_{2}^{2} \nonumber \\
    \pc_{4}(\y) &= \kc(A^{T}\y) + \hc_{\delta}(\y) \nonumber \,,
\end{align}
with conjugate of elastic-net term $ k \coloneqq \lambda\norm{\cdot}_{2}^{2} + \gamma\norm{\cdot}_{1} $ given by:
\begin{equation*}
    \kc(\v) = \sum_{i=1}^{N} \max (0, \abs{v_{i}} - \gamma)^{2} / (2 \lambda) \,.
\end{equation*}
% It is easy to verify the following strong convexity and Lipschitz continuity parameters for each problem:

% \begin{align*}
%     L_{h_{1}} = 1, \; L_{k_{1}} = \lambda, \; m_{h_{1}} = 1, \; m_{k_{1}} = \lambda & \\
%     L_{h_{2}} = 1, \; L_{k_{2}} = \lambda, \; m_{h_{2}} = 0, \; m_{k_{2}} = \lambda & \\
%     L_{h_{3}} = 1, \; L_{k_{3}} = \infty, \; m_{h_{3}} = 1, \; m_{k_{3}} = \lambda & \\
%     L_{h_{4}} = 1, \; L_{k_{4}} = \infty, \; m_{h_{4}} = 0, \; m_{k_{4}} = \lambda & \,.
% \end{align*}

\begin{figure*}

\begin{subfigure}{\figlen\textwidth}
    \centering
    \includegraphics[width=\linewidth]{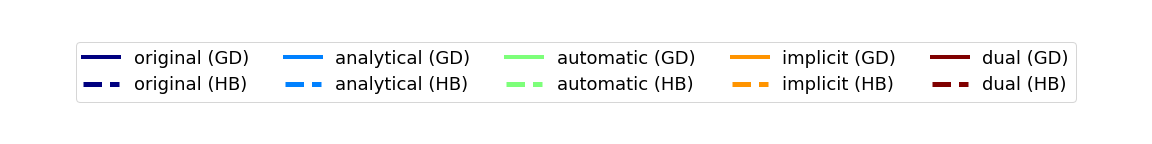}
\end{subfigure}

\begin{subfigure}{\figcelllen\textwidth}
    \centering
    \includegraphics[width=\linewidth]{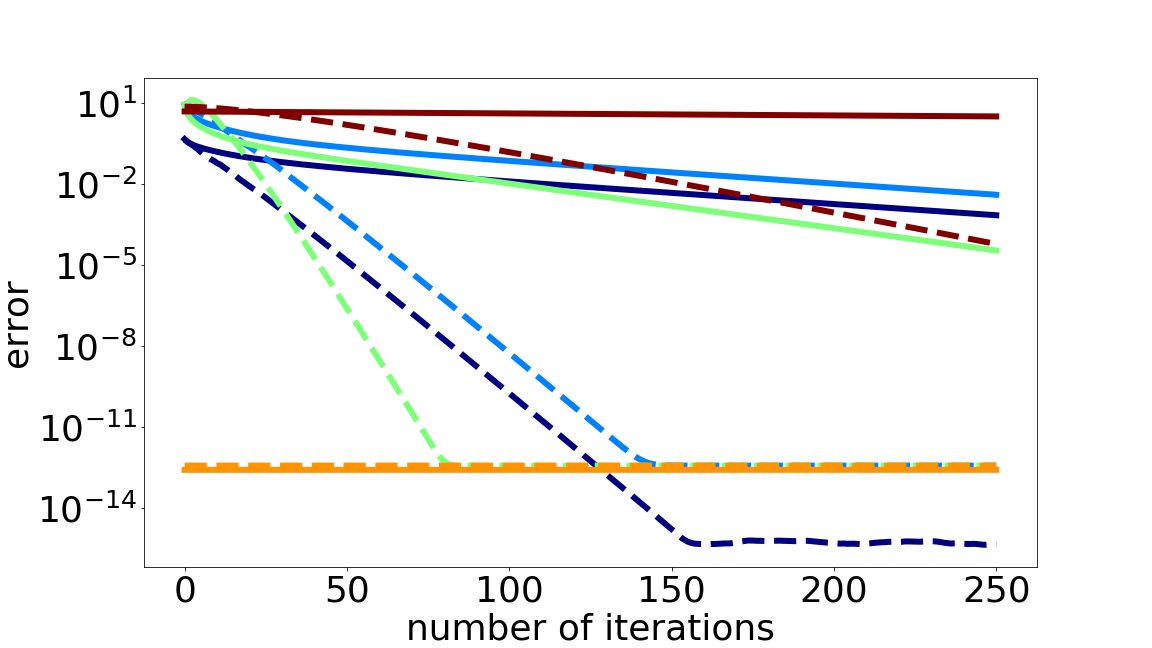}
\end{subfigure}
\begin{subfigure}{\figcelllen\textwidth}
    \centering
    \includegraphics[width=\linewidth]{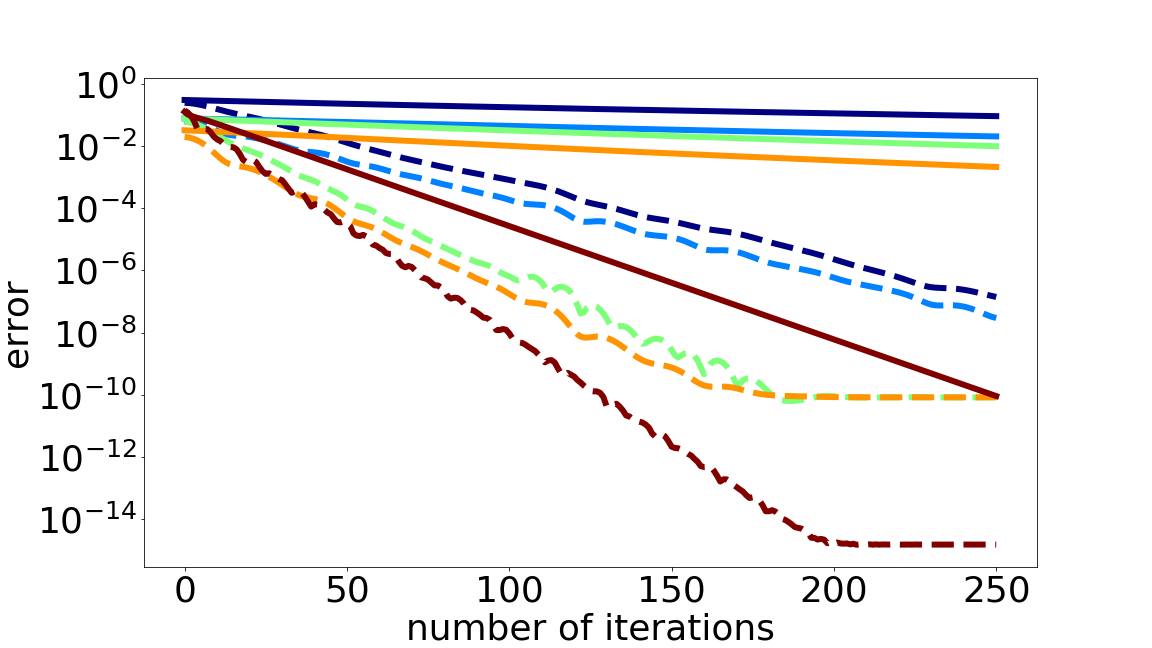}
\end{subfigure}
\begin{subfigure}{\figcelllen\textwidth}
    \centering
    \includegraphics[width=\linewidth]{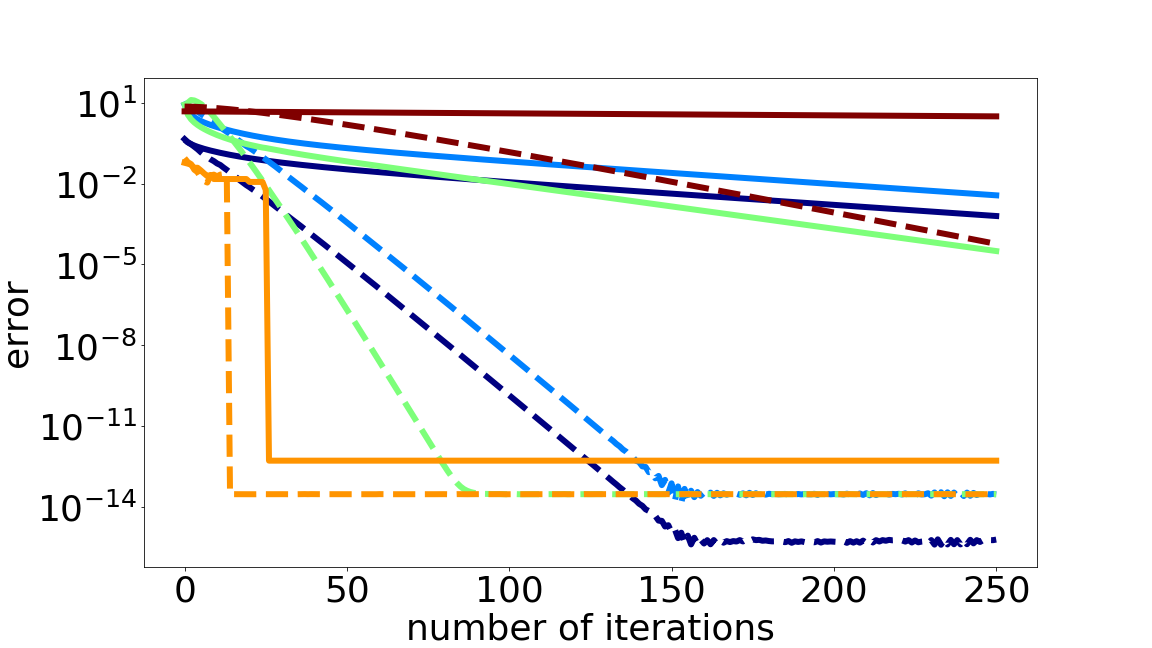}
\end{subfigure}
\begin{subfigure}{\figcelllen\textwidth}
    \centering
    \includegraphics[width=\linewidth]{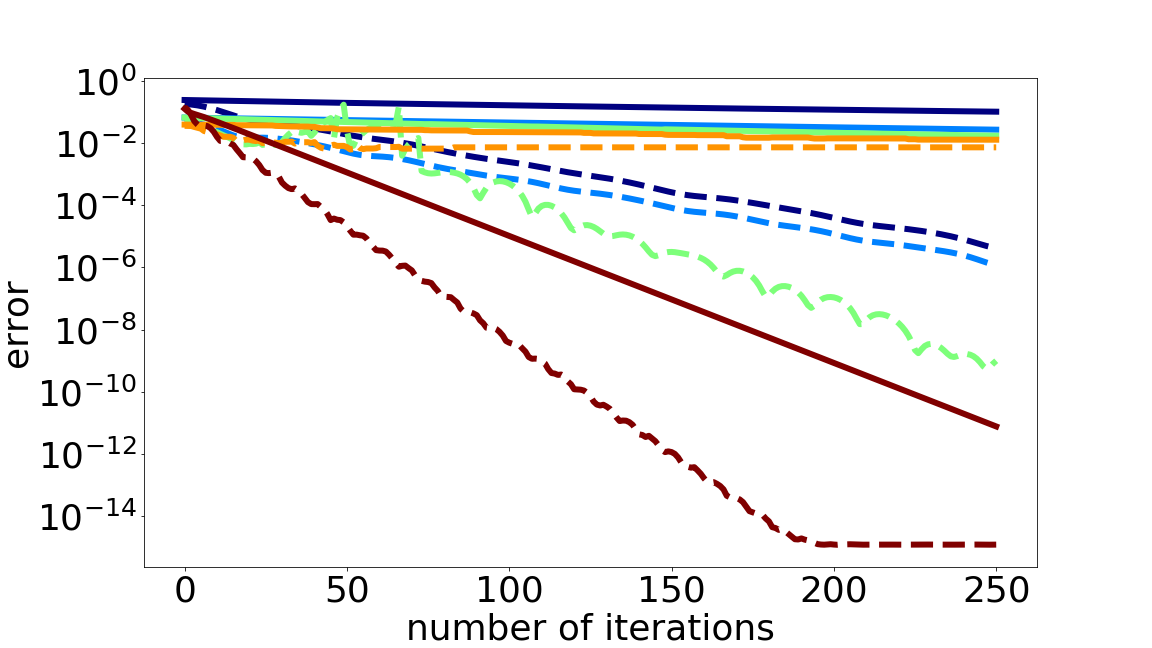}
\end{subfigure}

\begin{subfigure}{\figcelllen\textwidth}
    \centering
    \includegraphics[width=\linewidth]{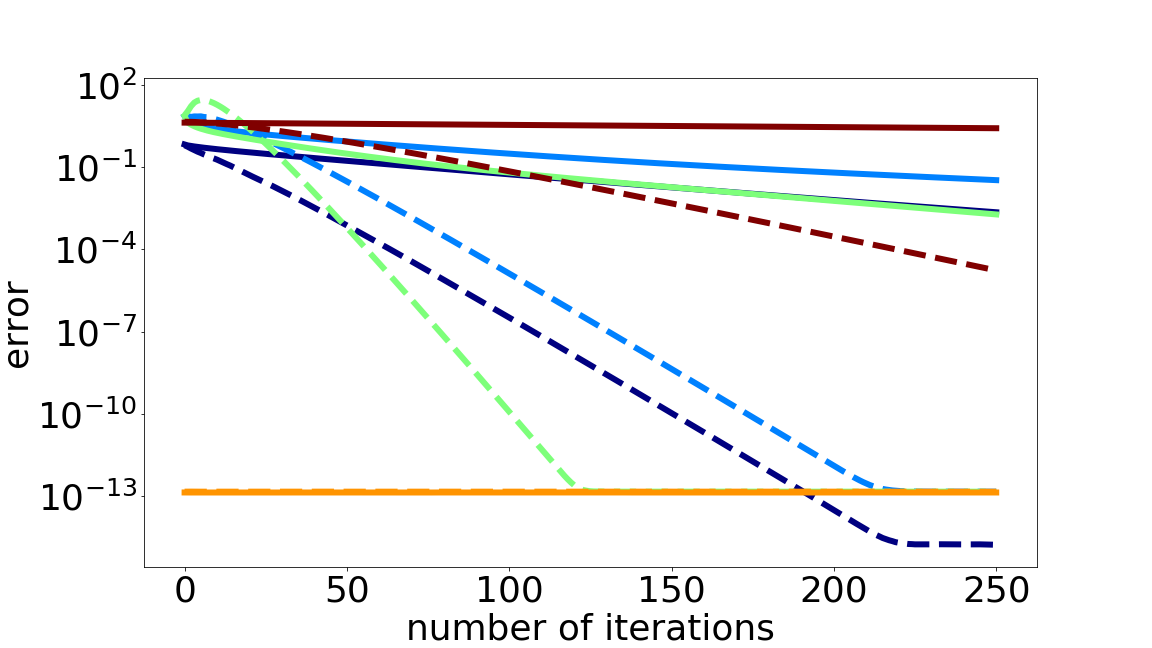}
\end{subfigure}
\begin{subfigure}{\figcelllen\textwidth}
    \centering
    \includegraphics[width=\linewidth]{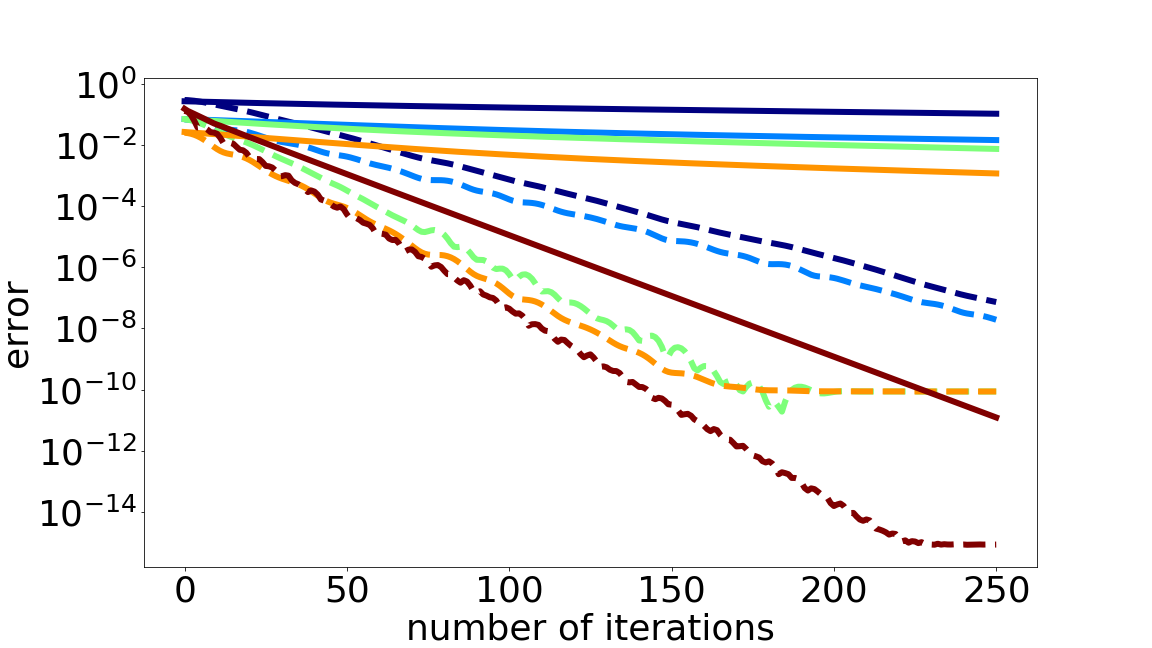}
\end{subfigure}
\begin{subfigure}{\figcelllen\textwidth}
    \centering
    \includegraphics[width=\linewidth]{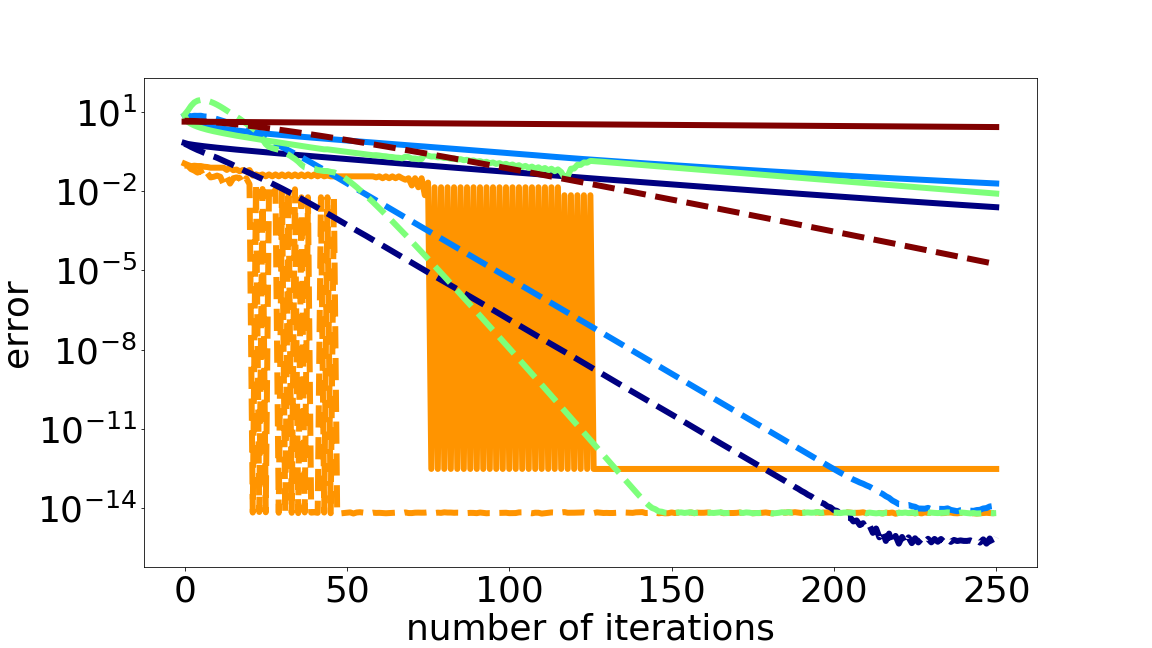}
\end{subfigure}
\begin{subfigure}{\figcelllen\textwidth}
    \centering
    \includegraphics[width=\linewidth]{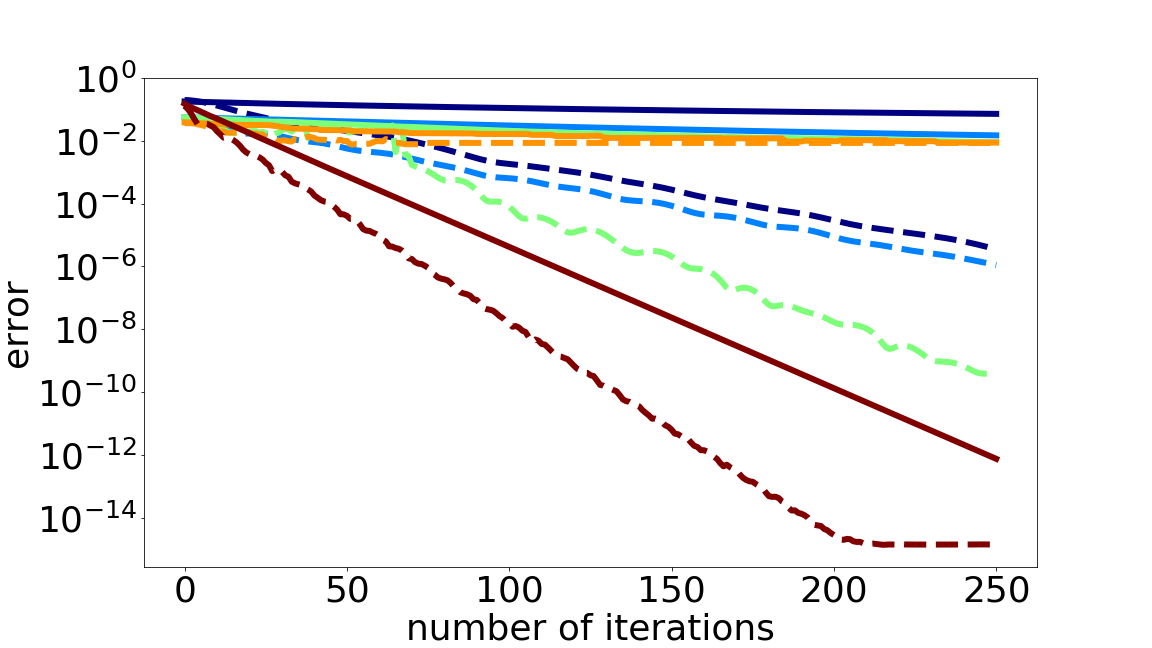}
\end{subfigure}

\begin{subfigure}{\figcelllen\textwidth}
    \centering
    \includegraphics[width=\linewidth]{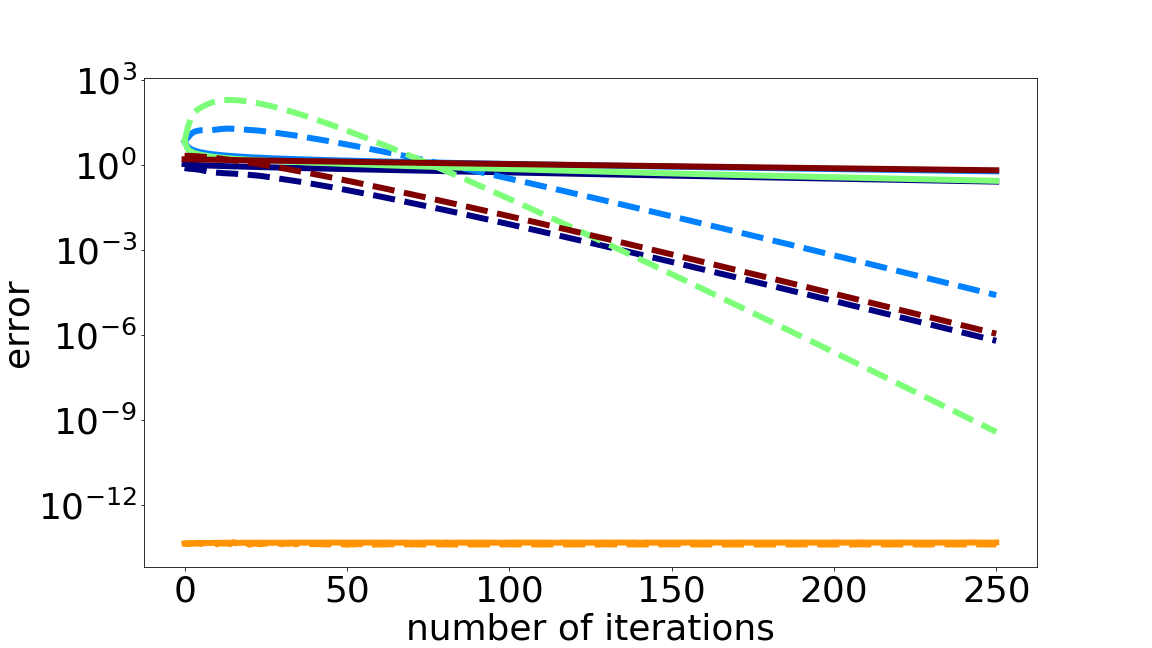}
\end{subfigure}
\begin{subfigure}{\figcelllen\textwidth}
    \centering
    \includegraphics[width=\linewidth]{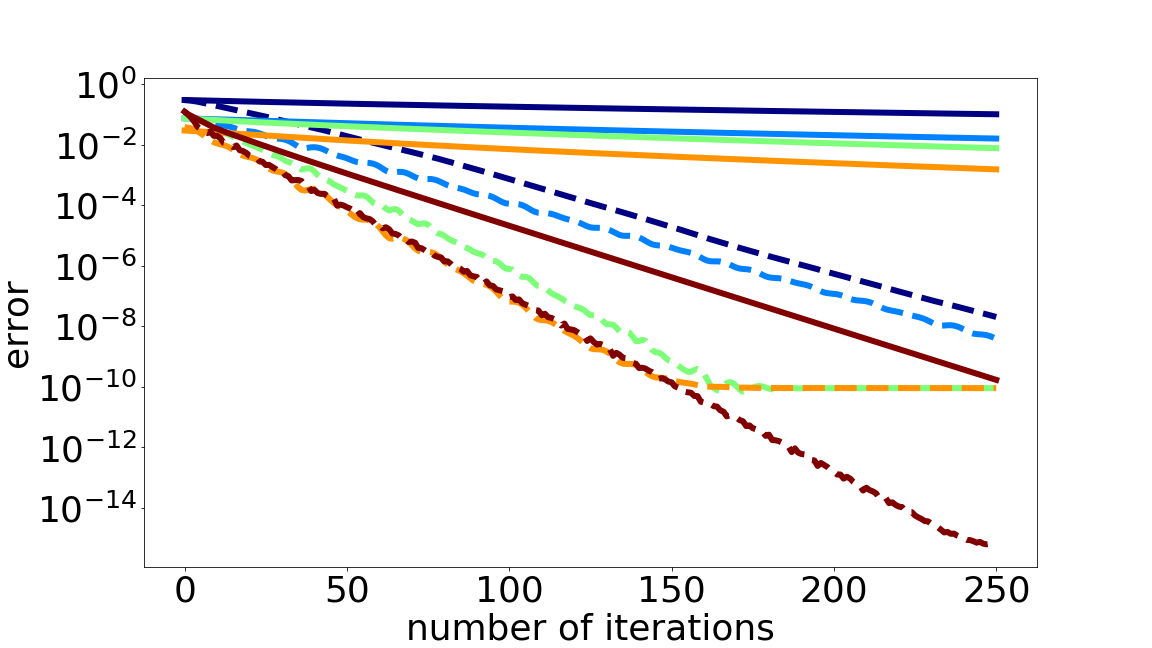}
\end{subfigure}
\begin{subfigure}{\figcelllen\textwidth}
    \centering
    \includegraphics[width=\linewidth]{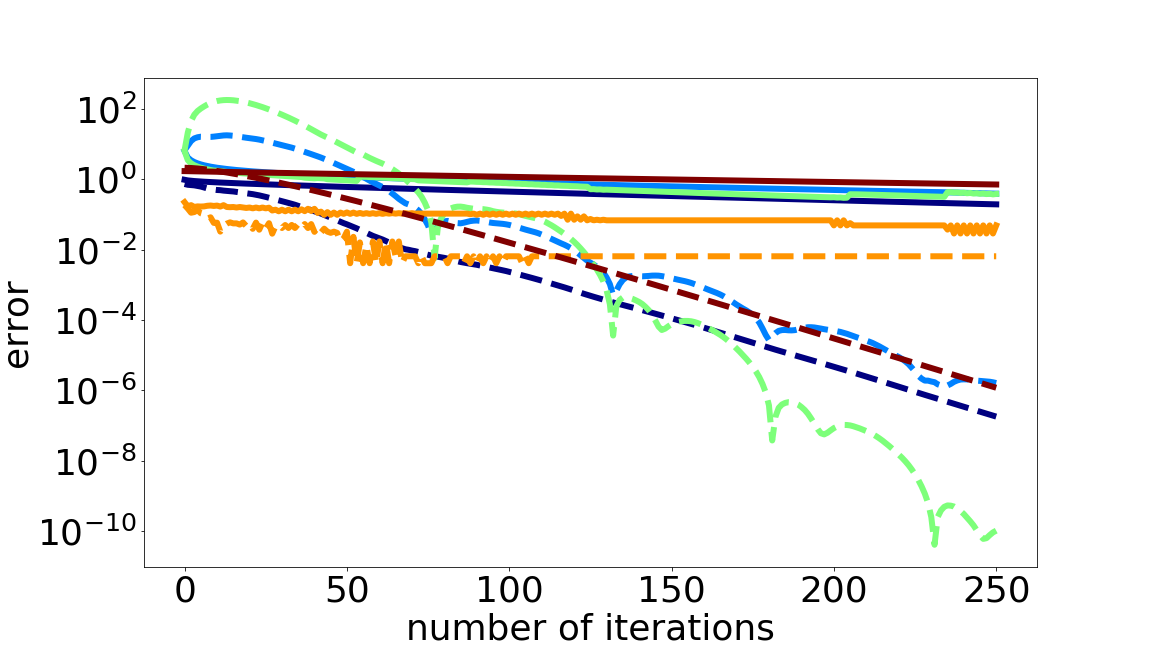}
\end{subfigure}
\begin{subfigure}{\figcelllen\textwidth}
    \centering
    \includegraphics[width=\linewidth]{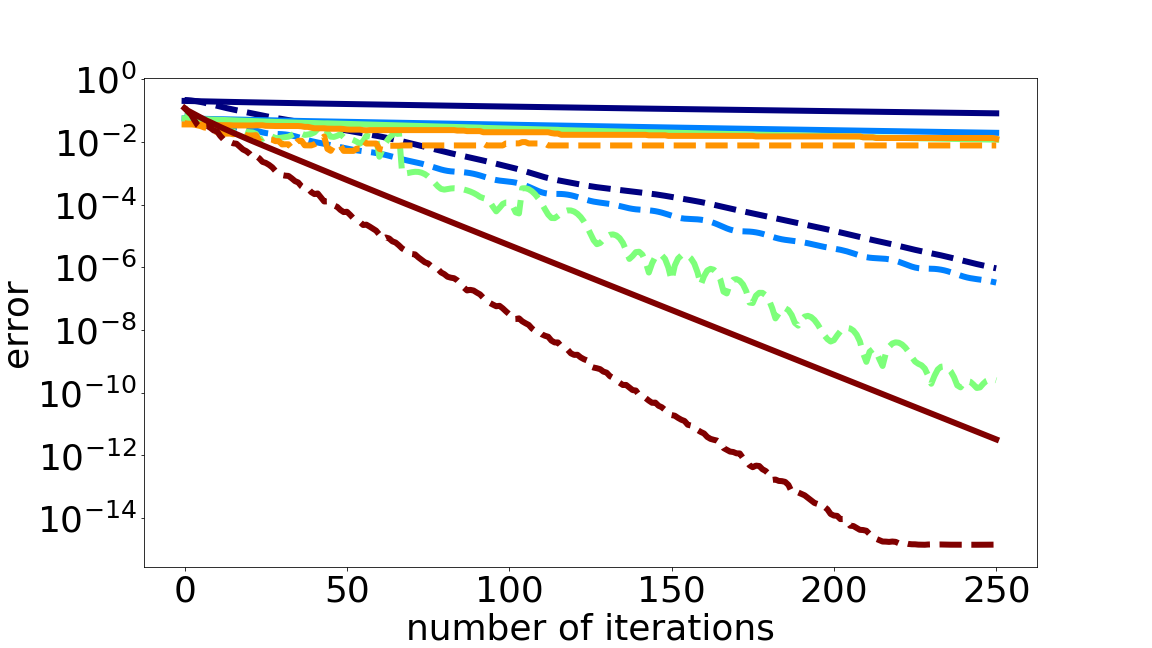}
\end{subfigure}

\begin{subfigure}{\figcelllen\textwidth}
    \centering
    \includegraphics[width=\linewidth]{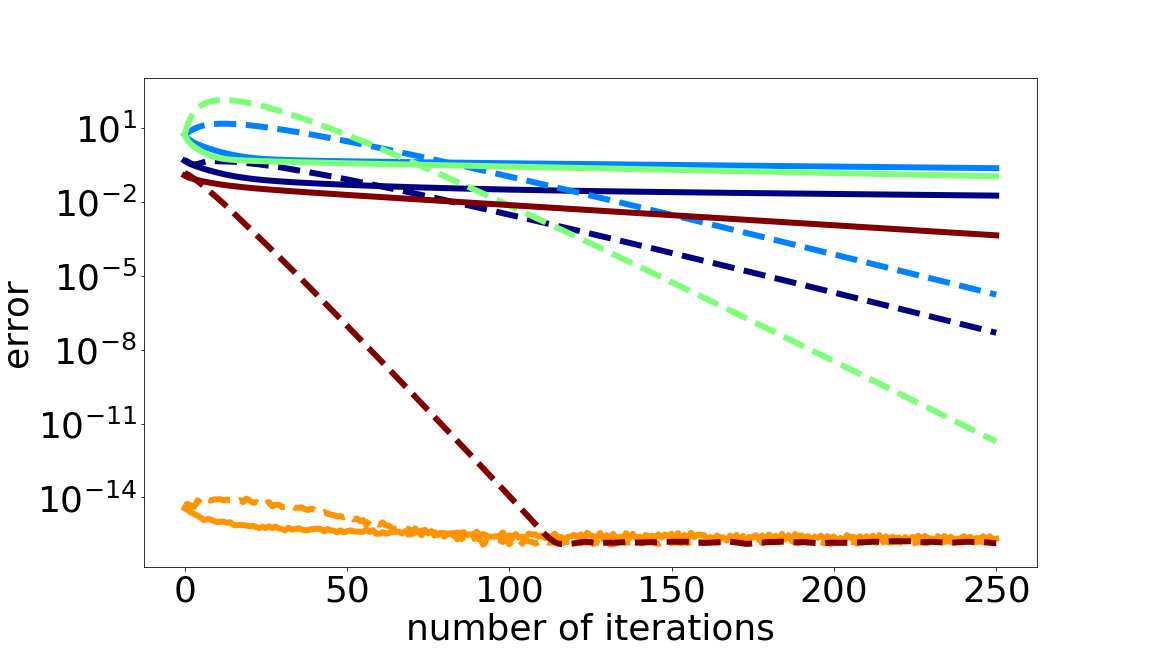}
\end{subfigure}
\begin{subfigure}{\figcelllen\textwidth}
    \centering
    \includegraphics[width=\linewidth]{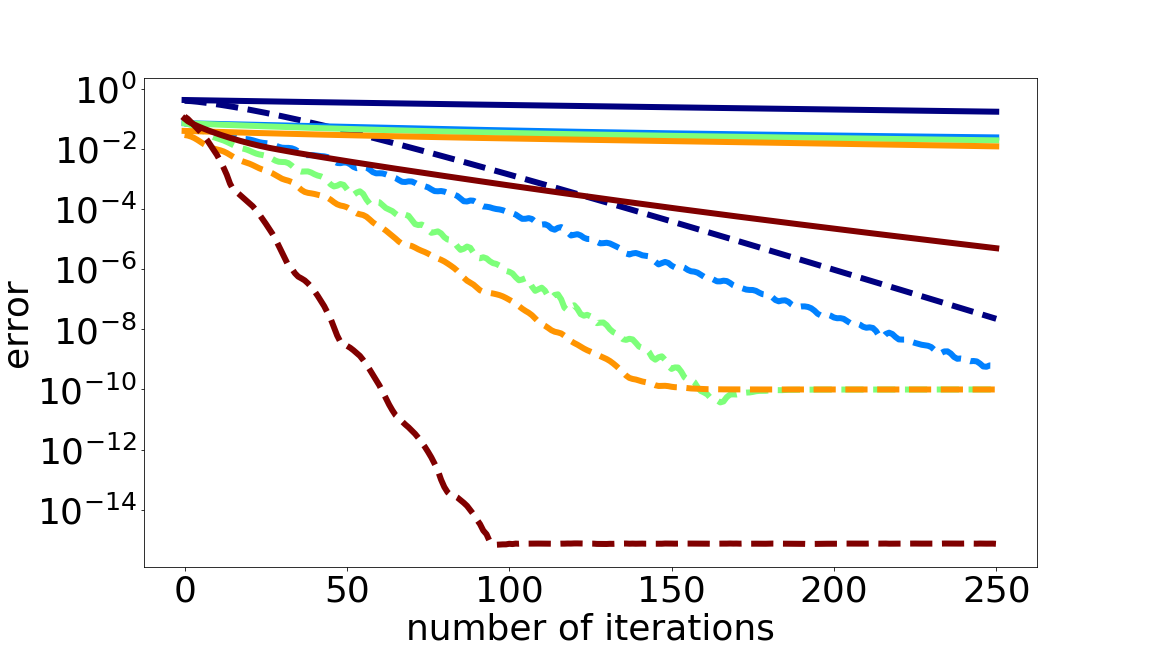}
\end{subfigure}
\begin{subfigure}{\figcelllen\textwidth}
    \centering
    \includegraphics[width=\linewidth]{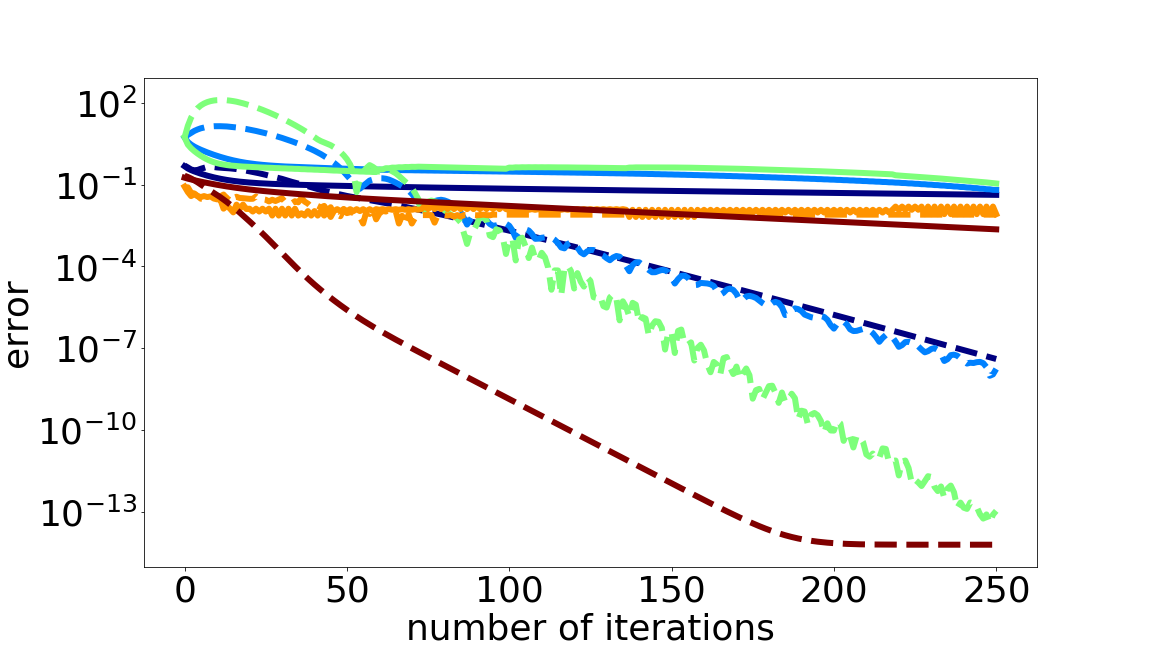}
\end{subfigure}
\begin{subfigure}{\figcelllen\textwidth}
    \centering
    \includegraphics[width=\linewidth]{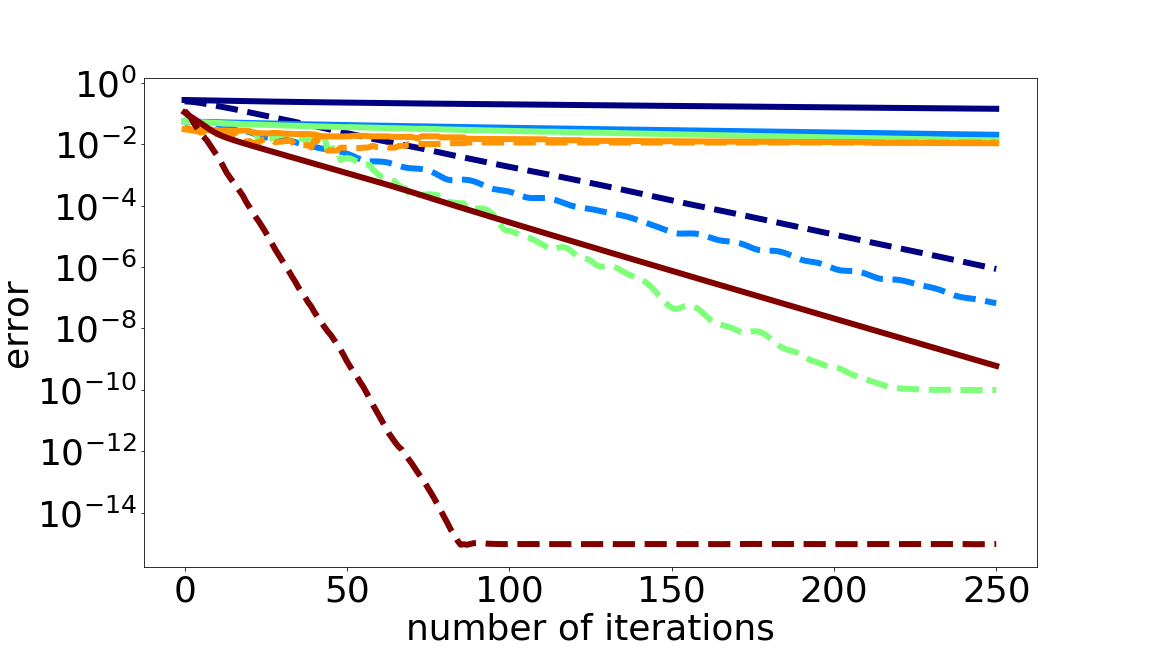}
\end{subfigure}

\begin{subfigure}{\figcelllen\textwidth}
    \centering
    \includegraphics[width=\linewidth]{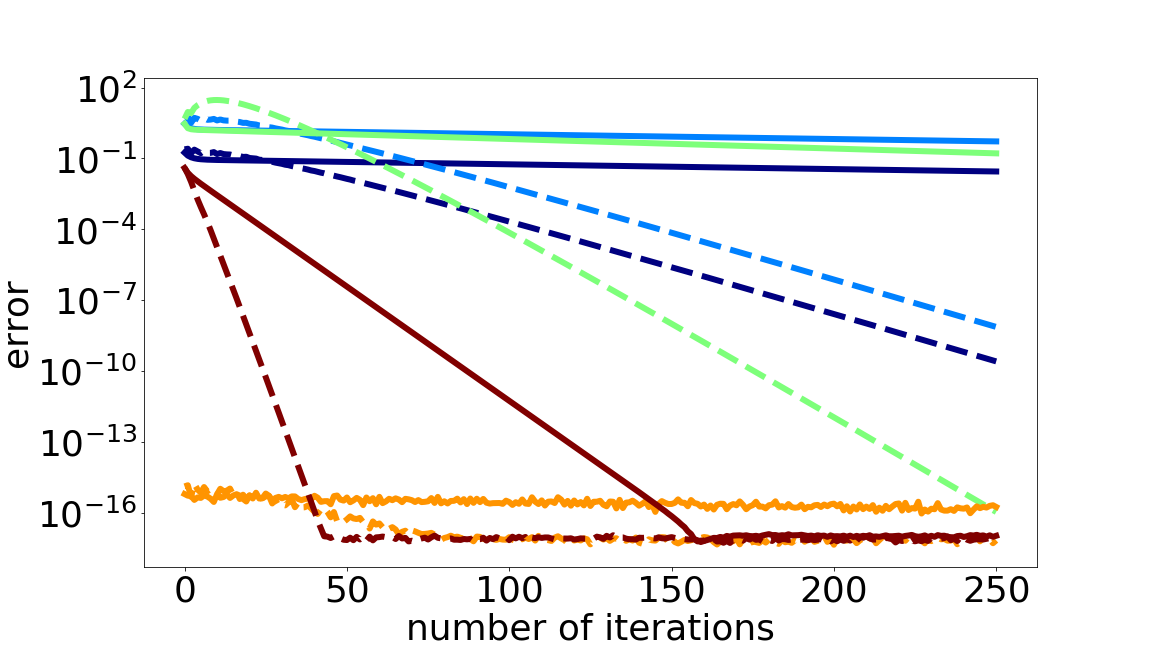}
\end{subfigure}
\begin{subfigure}{\figcelllen\textwidth}
    \centering
    \includegraphics[width=\linewidth]{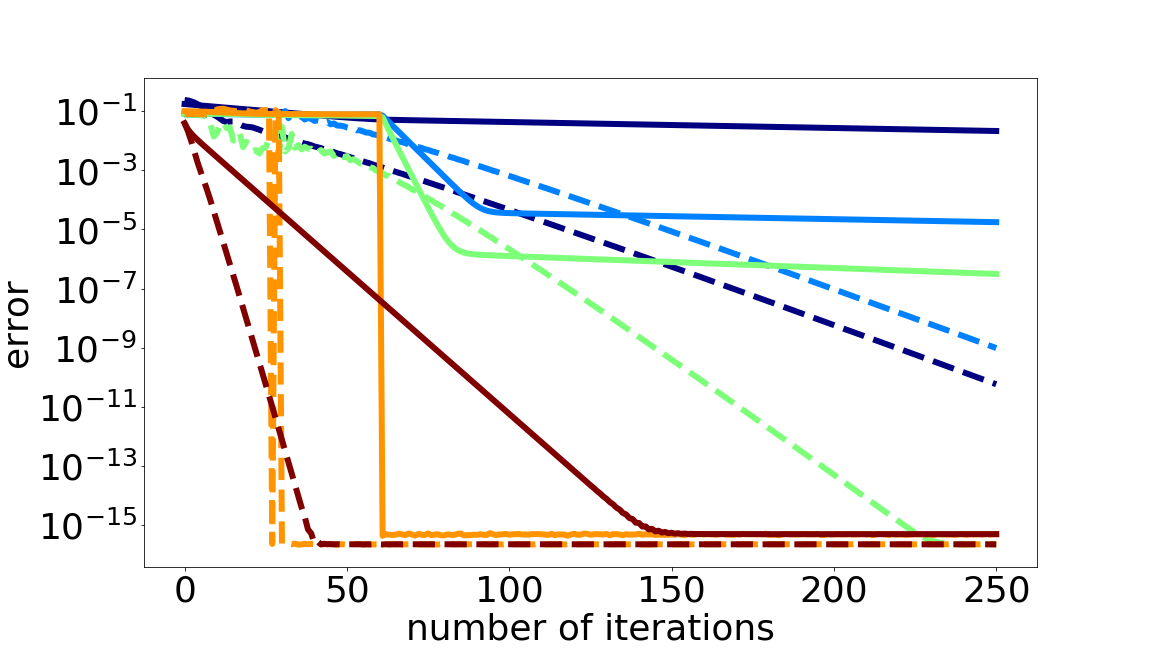}
\end{subfigure}
\begin{subfigure}{\figcelllen\textwidth}
    \centering
    \includegraphics[width=\linewidth]{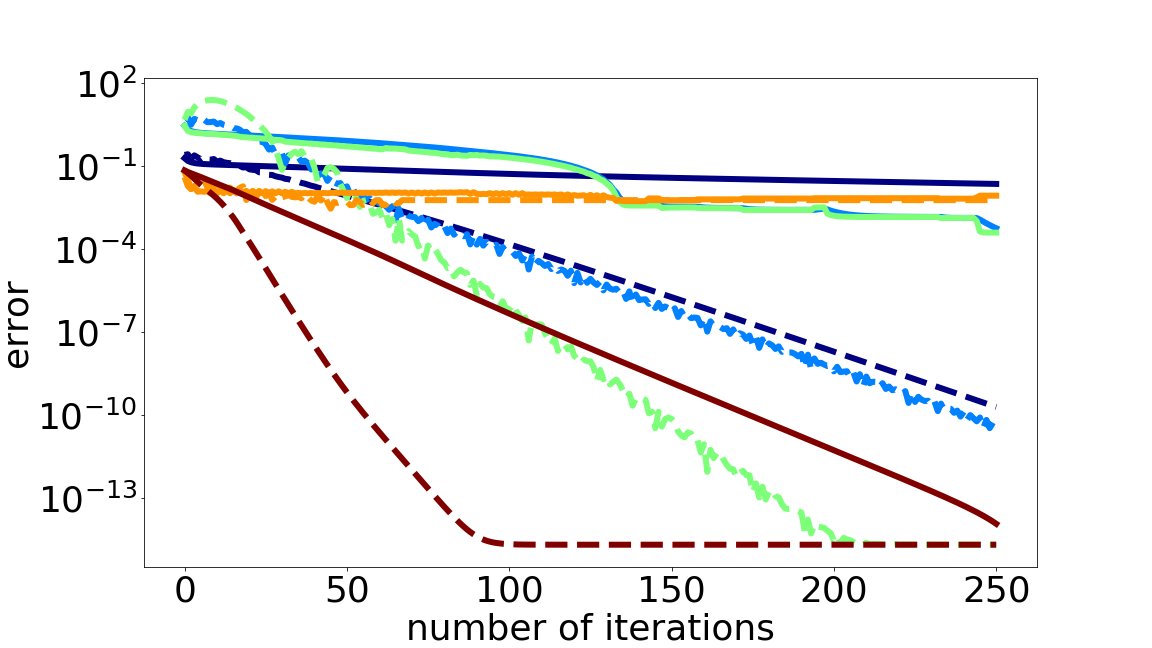}
\end{subfigure}
\begin{subfigure}{\figcelllen\textwidth}
    \centering
    \includegraphics[width=\linewidth]{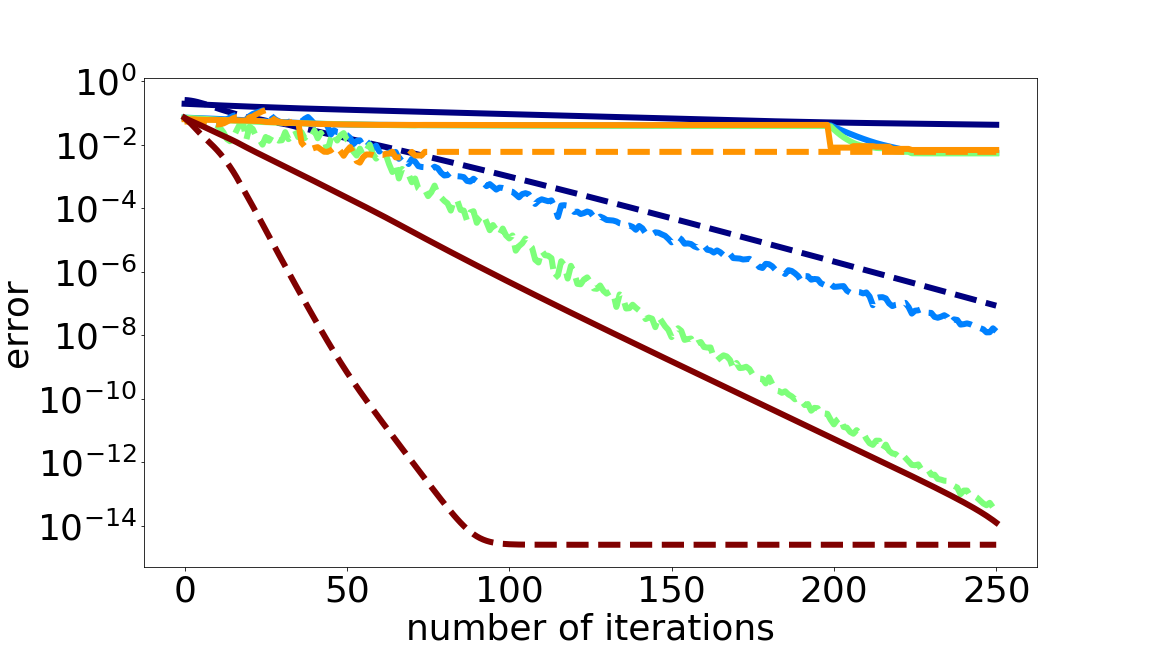}
\end{subfigure}

\caption{Error plots shown for the gradient sequences computed by the four methods, i.e., \eqref{eq:AnG}, \eqref{eq:AuG}, \eqref{eq:IG} and \eqref{eq:DG}, as well as the original (primal variable) sequence computed using (proximal) gradient descent (solid lines) and its inertial variant (dashed lines). The sequences are evaluated on four problems $ f_{1}, f_{2}, f_{3} $ and $ f_{4} $ ($ f_{j} $ changes from left to right in the given order) for five different values of $ P $, i.e., $ 90, 70, 50, 30 $ and $ 10 $ (P changes from top to bottom in the given order). Each cell shows the error plots for the ten sequences for a fixed $ P $ and $ f_{j} $; five for gradient descent or ISTA and five for the Heavy-ball method \cite{Pol64} or iPiasco \cite{OBP15}.}
\label{fig:Exp}
\end{figure*}

For evaluation we set $ N $ to $ 50 $ and choose $ P $ from $ \{ 10, 30, 50, 70, 90 \} $. This gives us five different plots for each problem and every value of $ P $ corresponds to a row in \fref{fig:Exp}. We set $ \lambda $ to $ 2 $, $ \gamma $ to $ 0.1 $ and $ \delta $ to $ 0.1 $. We keep $ \delta $ small because for sufficiently large values of $ \delta $, $ f_{2} $ will behave like $ f_{1} $ and $ f_{4} $ will behave like $ f_{3} $. Each element of $ A $ and $ \u $ is drawn from a normal distribution with mean $ 0 $ and standard deviation $ 1 $. Thus $ A $ is full-rank almost surely. We also scale each column of $ A $ differently to introduce ill-conditioning. For all our problems and methods, we use gradient descent when the problem is entirely smooth and proximal gradient descent when the problem has a non-smooth component with an optimal step size $ 2 / (L + m) $. To study the effect of inertia, we additionally employ the Heavy-ball method \cite{Pol64} and iPiasco \cite{OBP15} with optimal step size $ 4 / (\sqrt{L} + \sqrt{m})^{2} $ and momentum parameter $ (\sqrt{L} - \sqrt{m})^{2} / (\sqrt{L} + \sqrt{m})^{2} $ on these problems. The gradients and the Hessians are computed by using the autograd package \cite{MDA15}. For $ f_{1} $, an analytical expression exists for both $ \xmin (\u) $ and $ \grad{}{p} (\u) $. In order to compute a good estimate of such terms for the remaining problems, we run the primal and dual problems respectively for a large number of iterations. We verify the correctness of the obtained estimate of $ \grad{}{p}(u) $ by comparing it with the numerical gradient computed using central differences. Then we run each algorithm for $ 250 $ iterations for each $ P $ and $ f_{j} $ and generate the respective plots.

Each cell in \fref{fig:Exp} displays plots for $ \norm{\xn (\u) - \xmin (\u)}_{2} $ and $ \norm{\gn_{i} (\u) - \grad{}{p} (\u)}_{2} $ against the number of iterations $ K $. For $ f_{1} $ (first column), we note that all methods converge since they are backed by theoretical results. We see that for $ P > N $ (first column; first three rows), the dual method is slowest to converge and for $ P < N $ (first column; last two rows), it outperforms the analytical and automatic methods. Since the problem is quadratic therefore the implicit method yields $ \grad{}{p} $ in one step. For $ f_{2} $, the dual method shows faster convergence than all the methods for every choice of $ P $. The remaining two problems (third and forth columns) are not continuously differentiable and therefore \eqref{eq:AnG}, \eqref{eq:AuG} and \eqref{eq:IG} show an erratic behavior. The implicit method (red) performs very poorly in most cases. The analytical (orange) and automatic (green) gradient estimators manage to converge but do so in an irregular manner. Like $ f_{1} $, the dual method converges slowly when $ P \geq N $ for $ f_{3} $ and quickly when $ P < N $. Similarly, just like $ f_{2} $, the performance of the dual method is better than all other methods for every $ P $. The difference between the error plots generated by gradient descent or ISTA (solid lines) and the Heavy-ball method or iPiasco (dashed lines) is also visible. We observe that all the methods benefit from inertia. The fast convergence of automatic method is because of the fact that the acceleration in the convergence of $ \xK $ is also reflected in that of $ \Du\xK $ \cite{APM20, MO20}. In conclusion, we note that for the given non-smooth problems, especially $ f_{4} $, the dual gradient estimator is not only stable but also performs better than its primal counterparts.

% ********************
% >>>>> CONCLUSION <<<
% ********************
\section{Conclusion}

The variation of the value function of a parametric optimization problem is desirable in a wide range of Machine Learning and Image Processing applications. The methods for computing this gradient usually rely on directly differentiating the objective and are thus limited to the settings when the objective satisfies strong smoothness conditions. We emphasize that the gradient of the value function can also be computed by using a well-known result from convex duality. This method provides an enormous flexibility for numerical approximation of the value functions derivative, allows to leverage convergence rate results from convex optimization algorithms, and does not rely on differentiability; It can compute a subgradient of the value function.

\section*{Acknowledgments}

Sheheryar Mehmood and Peter Ochs are supported by the German Research Foundation
(DFG Grant OC 150/4-1). 

% *******************
% >>>>> APPENDIX <<<<<
% *******************

% \appendix
% \section{Proofs}

% ************************
% >>>>> bibliography <<<<<
% ************************
{\small
\bibliographystyle{ieee}
\bibliography{main}
}

\end{document}